\newtheorem{theorem}{Theorem}[section]
\newtheorem{lemma}[theorem]{Lemma}
\newtheorem{proposition}[theorem]{Proposition}
\newtheorem{corollary}[theorem]{Corollary}
\newtheorem{question}[theorem]{Question}
\newtheorem{conjecture}[theorem]{Conjecture}
\theoremstyle{definition}
\newtheorem{definition}[theorem]{Definition}
\newtheorem{example}[theorem]{Example}
\theoremstyle{remark}
\newtheorem{remark}[theorem]{Remark}
\newtheorem*{notation}{Notation}
\DeclareMathOperator{\Ima}{Im}
\newcommand{\height}{\operatorname{height}}
\newcommand{\Ann}{\operatorname{Ann}}
\newcommand{\Tor}{\operatorname{Tor}}
\newcommand{\DM}{\operatorname{DM}}
\newcommand{\DDM}{\operatorname{D_{DM}}}
\newcommand{\bDDM}{\operatorname{D^b_{DM}}}
\newcommand{\ch}{\operatorname{Ch}}
\newcommand{\fold}{\operatorname{Fold}}
\newcommand{\Db}{\operatorname{D^b}}
\newcommand{\fclass}{\operatorname{free class}}
\newcommand{\bbA}{\mathbbm{A}}
\newcommand{\bbB}{\mathbbm{B}}
\newcommand{\bbN}{\mathbbm{N}}
\newcommand{\bbP}{\mathbbm{P}}
\newcommand{\bbQ}{\mathbbm{Q}}
\newcommand{\bbV}{\mathbbm{V}}
\newcommand{\bbZ}{\mathbbm{Z}}
\newcommand{\bbk}{\mathbbm{k}}
\newcommand{\calC}{{\mathcal{C}}}
\newcommand{\calD}{{\mathcal{D}}}
\newcommand{\calE}{{\mathcal{E}}}
\newcommand{\calF}{{\mathcal{F}}}
\newcommand{\calO}{{\mathcal{O}}}
\newcommand{\frakm}{{\mathfrak{m}}}
\newcommand{\bfF}{\mathbf{F}}
\newcommand{\bfK}{\mathbf{K}}
\newcommand{\bfa}{\mathbf{a}}
\newcommand{\bfb}{\mathbf{b}}
\newcommand{\bfe}{\mathbf{e}}
\newcommand{\del}{{\partial}}
\newcommand{\betadm}{\beta^{\text{DM}}}
\newcommand{\bsmod}[1]{BS_{\text{mod}}(#1)}
\newcommand{\bsdm}[2]{BS_{\text{DM}}(#1, #2)}
\newcommand{\cvb}[1]{C_{\text{vb}}(#1)}
\renewcommand{\th}{{^\text{th}}}
\newcommand{\bs}{\text{Boij-S\"oderberg }}
\title{Boij-S\"oderberg Conjectures for Differential Modules}
\author{Maya Banks}
\date{\today}
\begin{document}

\maketitle
\begin{abstract}
Boij-S\"oderberg theory gives a combinatorial description of the set of Betti tables belonging to finite length modules over the polynomial ring $S = \bbk[x_1, \ldots, x_n]$. We posit that a similar combinatorial description can be given for analogous numerical invariants of \emph{graded differential $S$-modules}, which are natural generalizations of chain complexes. We prove several results that lend evidence in support of this conjecture, including a categorical pairing between the derived categories of graded differential $S$-modules and coherent sheaves on $\bbP^{n-1}$ and a proof of the conjecture in the case where $S = \bbk[t]$.
\end{abstract}

\section{Introduction}
\emph{Differential modules}, modules equipped with an endomorphism squaring to zero, are natural generalizations of chain complexes. One enlightening way to picture a differential module is as a complex absent the structure of a homological grading. Initially appearing in more of a book-keeping capacity in \cite{cartan2016homological}, differential modules have increasingly become objects of interest in commutative algebra, algebraic geometry, and algebraic topology in particular as useful tools for generalizing classical results and conjectures \cite{csenturk2019carlsson, iyengar2018examples, ToricTate, avramov2007class, brown2021minimal, boocher2010rank}. An overarching question in recent years can be distilled down to the following.
\begin{question}\label{q:mainq}
When can results about familiar objects be generalized to results about differential modules,  and what insight does the ability or failure to generalize yield about the original results?
\end{question}
Avramov-Buchweitz-Iyengar \cite{avramov2007class} investigate this question for a special class of differential modules---\emph{flag differential modules}---and find generalizations of several results in commutative algebra, including a generalization of the New Intersection Theorem of Hochster, Peskine, Roberts, and Szpiro \cite{Hoc74,PS73,Rob89}. Brown and Erman give a generalization of the notion of a minimal free resolution \cite{brown2021minimal}, and Banks and VandeBogert construct a differential module generalization of the Koszul complex \cite{BV22}. On the other hand, \cite{BV22} also provide examples of the failure of certain notions to generalize nicely to the realm of differential modules, while Iyengar and Walker similarly show that bounds on the Betti numbers of a free resolution do not generalize to differential modules even in the case of free flags \cite{iyengar2018examples}.

Our goal in this paper is to explore Question \ref{q:mainq} for Boij-S\"oderberg theory, which concerns the combinatorial structure of the set of all possible Betti tables of modules over a given ring. Initially conjectured by Boij and S\"oderberg \cite{2008} for modules over the polynomial ring, this theory brought a new perspective to the study of Betti tables.
 The \bs conjectures were proven in the finite length case in 2008 by Eisenbud and Schreyer via a duality between free resolutions of graded modules over the polynomial ring and vector bundles on projective space \cite{eisenbud2008Betti}. Their results describe the positive rational cone spanned by Betti tables belonging to modules over the polynomial ring and give a classification, up to rational multiple, of all such tables. 

 \begin{remark}
     The original theory was developed in the finite length case and subsequently extended to other codimensions. For simplicity, we restrict ourselves here to the finite length case as well.
 \end{remark}

The results of \cite{eisenbud2008Betti, boij2012betti} were soon generalized and extended in various directions, such as \cite{eisenbud2009cohomology, 2009, Floystad_2010, 2011, Berkesch_2011, Berkesch_2012, Boij_2015, Erman_2016, Ford_2018a, Ford_2018b, DeStefani2021} as well as most recently in \cite{IMW22}. In a particularly interesting extension of the original theory, Eisenbud and Erman \cite{EE2017} showed that the duality in \cite{eisenbud2008Betti} does not actually depend on the structure of a free resolution---the duality holds for minimal free complexes with finite length homology more generally (a theme further explored by \cite{IMW22}). This allowed the original Boij-S\"oderberg theory to be extended from free resolutions to the more general setting of free complexes, but it also showed that the foundations of the theory rested on a different level of structure than what was originally assumed and conjectured. In particular, one interpretation of this generalization is that we should think of Boij-S\"oderberg theory as \emph{actually} being a theory about the numerics of minimal finite free complexes, rather than a theory about invariants of modules. This leads us to question what the ``correct" level of generality for this theory really is. That is, how far can the original Boij-S\"oderberg theory extend, and what is the key structure on which the theory truly relies? Can Boij-S\"oderberg theory be further generalized beyond free complexes, and what new insight might such a generalization give us?

We posit a generalization of the Boij-S\"oderberg conjectures for differential modules and build evidence in support of this conjecture, including some partial results that shed new light on some of the key structures underlying the original theory, while simultaneously giving insight on what makes differential modules similar to and different from complexes and free resolutions.

Let $S = \bbk[x_1, \ldots, x_n]$ for any field $\bbk$ and denote by $\DM(S,a)$ the category of differential modules of degree $a$ over $S$, that is the graded modules $D$ over $S$ with a square 0 endomorphism $D\to D(a)$. 
For a differential module $D\in \DM(S,a)$, the \emph{Betti vector} of $D$, denoted $\betadm(D)$ is the vector whose $i\th$ entry counts the number of degree $i$ generators in a minimal free resolution of $D$ (see Definition \ref{def:min}). 

Let $\bbV = \bigoplus_{i,j\in\bbZ}\bbQ$ and $\bbB = \bigoplus_{j\in\bbZ}\bbQ$, so that Betti tables and Betti vectors are naturally elements of $\bbV$ and $\bbB$, respectively. We denote by $\bsmod{S}\subset\bbV$ and $\bsdm{S}{a}\subset\bbB$ the rational cones of Betti tables of graded, finite length $S$-modules and Betti vectors of degree $a$ differential $S$-modules with finite length homology.

The Betti vector of a differential module is an analogous numerical object to the Betti table of a minimal free resolution, but it represents a certain coarsening of the data of a Betti table. More precisely, for any $a\in\bbZ$ we may regard any $S$-complex as a differential module in $\DM(S,a)$ via what \cite{brown2021minimal} call the \emph{folding functor}, which we define in more detail in Section 2. As an operation on Betti tables, the folding functor ``flattens" the Betti table of a minimal free resolution by forgetting all homological data and retaining only the internal grading. In other words, this flattening is a map $\bbV\to\bbB$ that maps Betti tables of minimal free resolutions of $S$-modules into $\bsdm{S}{a}$ for any $a$. We propose the following:

\begin{conjecture}\label{conj:intro}
This ``flattening" gives a surjection of cones $\bsmod{S}\to\bsdm{S}{a}$ for every $a$. More precisely Any differential module with finite length homology has a Betti vector that can be written as a positive rational combination of Betti vectors of differential modules whose homology have Betti tables that are extremal in $\bsmod{S}$.
\end{conjecture}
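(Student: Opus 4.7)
The plan is to establish the surjectivity of cones in two stages. For the ``easy'' direction, the folding functor $\fold$ from minimal free complexes to $\DM(S,a)$ already sends $\bsmod{S}$ into $\bsdm{S}{a}$ under flattening: for any minimal free resolution $F$ of a finite length $S$-module $M$, the differential module $\fold(F)$ has finite length homology $M$ and Betti vector equal to the flattening of $\beta(F)$. In particular, if $\beta(M) = \sum c_i \pi_i$ is an Eisenbud-Schreyer decomposition realized by pure modules $M_i$, then the folded minimal free resolutions $\fold(F_i)$ are differential modules with homology having extremal Betti tables $\pi_i$, and their Betti vectors combine with coefficients $c_i$ to the flattening of $\beta(M)$. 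So the conjecture reduces to showing that every Betti vector in $\bsdm{S}{a}$ is itself the flattening of some Betti table in $\bsmod{S}$.

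For a general $D \in \DM(S,a)$ with finite length homology, the natural candidate decomposition comes from applying the classical Boij-S\"oderberg decomposition to $\beta(H(D))$ and flattening the result. The immediate obstacle is that $\betadm(D)$ need not coincide with the flattening of $\beta(H(D))$: the Iyengar-Walker examples show that the minimal free resolution of $D$ in the Brown-Erman sense can carry ``ghost'' pairs of generators that cancel in homology but contribute genuinely to $\betadm(D)$. These extra generators must be accounted for, and the crux of the argument is to show that any such ghost contribution can itself be written as a nonnegative rational combination of flattened pure diagrams --- equivalently, that no extremal ray of $\bsdm{S}{a}$ lies outside the image of the flattening map.

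The approach I would take to this main obstacle is to exploit the categorical pairing between $\bDDM(S,a)$ and $\Db(\bbP^{n-1})$ promised in the abstract, playing for differential modules the role that the Eisenbud-Schreyer cohomology pairing plays for complexes. The plan is to extract from each appropriate coherent sheaf on $\bbP^{n-1}$ a linear functional on $\bbB$ that is nonnegative on $\bsdm{S}{a}$; if one can produce enough such functionals to cut out $\bsdm{S}{a}$ as an intersection of half-spaces, then the extremal rays are pinned down by vanishing conditions and can be matched against flattened pure tables by direct construction. With the facet description in hand, the decomposition of $\betadm(D)$ follows by the standard greedy algorithm along the coordinatewise order on pure degree sequences. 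The hardest technical step is almost certainly the identification of all extremal rays: even for $n = 2$ it is not obvious that ghost-type contributions cannot produce Betti vectors that are extremal in $\bsdm{S}{a}$ yet fail to be flattenings of pure diagrams, and this is presumably the reason the present paper settles the conjecture only in the case $n = 1$.
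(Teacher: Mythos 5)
First, be aware that the statement you are proving is a conjecture that the paper itself does not resolve in general: it is established only for $a>0$ (for all $n$) and for $a=0$ over $A=\bbk[t]$, and your sketch does not close that gap either --- it is a plan whose decisive steps (a facet description of the cone spanned by flattened pure tables, and nonnegativity of those facet functionals on all Betti vectors of differential modules) are exactly the steps left open in the paper. Beyond incompleteness, there is a directional error in how you frame the obstruction. You describe ``ghost'' generators that ``contribute genuinely to $\betadm(D)$,'' i.e.\ you take $\betadm(D)$ to be potentially \emph{larger} than the flattening of $\beta(H(D))$. The opposite holds: the minimal free resolution of $D$ is a summand of a free flag anchored on the minimal free resolution of $H(D)$, so $\betadm_j(D)\le\sum_i\beta_{i,j}(H(D))$ entrywise, and the phenomenon to control is \emph{cancellation}. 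For example, the paper exhibits $D$ over $\bbk[x,y]$ with $H(D)=\bbk\oplus\bbk(-2)$ whose Betti vector $(1,2,0,2,1)$ is strictly smaller than the flattening $(1,2,2,2,1)$ and is not the flattening of any finite length module's Betti table (only of a rational multiple of one). The substantive question is whether such cancelled vectors can escape the cone spanned by flattened pure tables; your argument does not engage with this.

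Second, your proposed engine --- the categorical pairing with $\Db(\bbP^{n-1})$ --- is only available for $a=0$: the derived pushforward underlying $\Phi$ requires the expanded complex to be one-periodic, which fails when the differential has nonzero degree, so the pairing cannot be used ``for every $a$.'' For $a>0$ the paper proves the conjecture by an entirely different, elementary route: Proposition~\ref{prop:wholeorthant} shows the cone is the whole positive orthant, with the coordinate rays realized by shifted folds of the Koszul complex on $(x_1^a,\dots,x_n^a)$, which is pure. Your framework would need to explain why vectors with a single nonzero entry are legitimate extremal rays in positive degree, given that Proposition~\ref{prop:n+1degs} forbids them when $a\le 0$; the two regimes behave so differently that a uniform pairing-based argument is not plausible. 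Finally, your closing remark that the paper settles the conjecture ``only in the case $n=1$'' is inaccurate on both counts: it is settled for all $n$ when $a>0$, and the one-variable result holds only for $a=0$ and is proved by an explicit simplicial decomposition algorithm for the functionals $\tau_j$ and $\sigma_j$, not by the pairing.
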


Before moving on to our results in support of this conjecture, we first consider a small example

\begin{example}\label{ex:intro}
    Consider the two variable case when $S = \bbk[x,y]$ and $(D, \del)$ is the degree 0 differential module where

\begin{align*}
    D = S\oplus S(-1)^2 \oplus S(-2) \text{ and } \del = \begin{bmatrix}
           0 & x & y & 0\\
           0 & 0 & 0 & -y\\
           0 & 0 & 0 & x\\
           0 & 0 & 0 & 0
    \end{bmatrix}
\end{align*}
This differential module has homology $\bbk$ and Betti vector $\betadm(D) = (1^\circ, 2, 1)$ where $1^\circ$ indicates that the 1 is in the degree 0 position in the vector. In fact, by \cite{BV22}, every degree 0 differential module over $S = \bbk[x,y]$ with homology $\bbk$ has this Betti vector.

On the other hand, suppose we instead let $(D,\del)$ be a degree 2 differential module over $S$, with

\begin{align*}
    D = S\oplus S(1)^2 \oplus S(2) \text{ and } \del = \begin{bmatrix}
           0 & x & y & 0\\
           0 & 0 & 0 & -y\\
           0 & 0 & 0 & x\\
           0 & 0 & 0 & 0
    \end{bmatrix}
\end{align*}
Again, $D$ has homology $\bbk$ and now its Betti vector is $\betadm(D) = (1,2,1^\circ)$.
On the other hand, let $(D', \del')$ be the degree 2 differential module where
\begin{align*}
    D' = S\oplus S(1)^2 \oplus S(2) \text{ and } \del = \begin{bmatrix}
           0 & x & y & 1\\
           0 & 0 & 0 & -y\\
           0 & 0 & 0 & x\\
           0 & 0 & 0 & 0
    \end{bmatrix}
\end{align*}

This differential module also has homology $\bbk$ but, unlike $(D,\del)$, $(D', \del')$ is not minimal. It has minimal free resolution 
\[
\begin{bmatrix}
       xy & x^2\\
       -y^2 & -xy
\end{bmatrix}
\]
and Betti vector $\betadm(D') = (0, 2, 0^\circ)$.
\end{example}

We draw attention to three particular features of this example and the implications that they highlight. First, observe that $\betadm(D')$ cannot be the Betti vector of a fold of minimal free resolution of a finite length, graded $S$-module since any such module must have total Betti numbers at least 4. In other words, while we conjecture that flattening gives a surjection on \emph{cones}, this example shows that it certainly cannot be a surjection onto the actual semigroup of Betti vectors. The second observation is that we have two differential modules whose homologies are isomorphic, but whose minimal free resolutions--and thus their Betti vectors--are genuinely different. This behavior represents a stark departure from what we are used to in the realm of finite length, graded $S$-modules. Finally, we point out the differing behavior exhibited by the differential modules in $\DM(S,a)$ as $a$ varies from 0 to 2. 

This third point is actually a more general phenomenon that will continue to appear. When dealing with graded complexes, we are always free to twist the individual modules in the complex so that the differential is of degree 0. For differential modules however, the source and target of our differential are the same, so we can not twist one without twisting the other. This means that we are unable to control the degree of the differential in the way that we are used to, and must contend with differentials of nonzero degree. While a differential of nonzero degree may seem innocuous at first glance, we will see that this variance can actually have drastic impacts on the numerical invariants at hand. This phenomenon was observed as well in \cite[Example 5.4]{brown2021minimal} and \cite[Corollary 3.7, Proposition 3.8]{BV22}, and turns out to be a theme underpinning our results here as well. In general, it seems that while for degree 0 we may expect differential modules to behave more or less similarly to complexes in many regards, the same is not the case in more general degree. For Boij-S\"oderberg theory specifically, this behavior seems to support the idea that the ability to force the differential to be degree 0 is actually an indispensable piece of the structure on which the theory rests. More broadly, it seems that one answer to the question of what makes differential modules similar to or different from complexes lies in their degree.

\subsection{Results} 

We split our results into three cases depending on whether the degree of the differential module is positive, negative, or zero. The positive degree case turns out to be much more straightforward---as well as much less reminiscent of the standard Boij-S\"oderberg theory---and we are able prove the following in full generality via quite elementary methods.

\begin{theorem}\label{thm:intro-a>0}
    Conjecture \ref{conj:intro} is true for differential modules of degree $a>0$.
\end{theorem}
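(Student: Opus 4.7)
The plan is to treat the $a>0$ case by a direct construction, bypassing much of the machinery of classical Boij-S\"oderberg theory. The key observation is that for $a>0$ the differential $\del\colon D\to D(a)$ is permitted to connect generators across a much wider range of internal degrees than in the $a=0$ setting, and this additional flexibility allows the construction of differential modules with finite length homology and very short minimal presentations whose homology nevertheless has a pure Betti table---the $2$-generator module $(D',\del')$ of Example~\ref{ex:intro} is the prototypical example.

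First I would assemble a family of such ``building block'' differential modules $\{D_\alpha\}\subset\DM(S,a)$ by writing down explicit generators and matrix differentials in the style of $(D',\del')$, each with finite length homology having a pure Betti table. For each $D_\alpha$ the Betti vector $\betadm(D_\alpha)$ can be read off directly from the presentation, and a direct check verifies that $\betamod(H(D_\alpha))$ is extremal in $\bsmod{S}$.

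Next I would show that every element of $\bsdm{S}{a}$ lies in the positive rational cone on $\{\betadm(D_\alpha)\}$. Given $D\in\DM(S,a)$ with finite length homology and Betti vector $b$, I would argue by induction on the total Betti number of $b$: at each step find a building block $D_\alpha$ and a positive rational scalar $\lambda$ so that $b-\lambda\betadm(D_\alpha)$ remains in $\bsdm{S}{a}$ with strictly smaller total Betti number. Iterating produces the required decomposition $b=\sum_\alpha q_\alpha\betadm(D_\alpha)$ with all $q_\alpha\in\bbQ_{\geq 0}$.

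The main obstacle I foresee is the inductive step, namely verifying that a suitable building block and scalar always exist. For $a>0$ this should be within reach because the extra internal-degree flexibility of $\del$ gives a rich enough supply of building blocks to cover every extremal direction of $\bsdm{S}{a}$. By contrast, for $a=0$ the differential is forced to behave as the boundary of a chain complex, which drastically restricts the available constructions; this is consistent with the paper's indication that the remaining cases require substantially deeper arguments and are handled separately.
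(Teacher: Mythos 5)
Your overall instinct---that the $a>0$ case should be handled by a direct, elementary construction exploiting the extra internal-degree flexibility of a positive-degree differential---matches the paper. But the proposal has a genuine gap exactly where you flag it: the inductive step. You never pin down which building blocks suffice, and without that the claim that one can always subtract a positive multiple of some $\betadm(D_\alpha)$ and remain in $\bsdm{S}{a}$ with strictly smaller total Betti number is unsupported. A greedy subtraction of this kind is the hard combinatorial core of every Boij--S\"oderberg-type statement (compare the decomposition algorithm the paper must develop even for the one-variable, degree-$0$ case), and asserting that it ``should be within reach'' is not an argument.

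The paper's proof avoids the induction entirely via one observation you are missing. For $a>0$, take the Koszul complex $\bfK$ on $(x_1^a,\ldots,x_n^a)$ and form $\fold_a(\bfK)$. Since the folding functor twists the $i$th term by $ia$ and the $i$th term of $\bfK$ is $S(-ia)^{\binom{n}{i}}$, the fold is a minimal free differential module generated entirely in a single internal degree, with Betti vector $2^n\bfe_0$ and finite length homology $S/(x_1^a,\ldots,x_n^a)$, which has a pure resolution of type $(0,a,\ldots,na)$; shifting produces $2^n\bfe_i$ for every $i$. Hence $\bsdm{S}{a}$ is the \emph{entire} positive orthant of $\bbB$, its extremal rays are the coordinate axes, each is realized by a fold of a pure resolution, the shifted degree sequences form a chain, and the decomposition of an arbitrary Betti vector is then immediate. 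Your prototype $(D',\del')$ points the right way---its Betti vector is concentrated in one degree---but the Koszul-on-powers construction is the piece that turns that hint into a proof and dissolves the inductive step you could not complete.
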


We are able to prove this result by explicitly constructing all of the extremal rays of the Betti cone. The proof relies on a certain level of ``cancellation" that can appear when passing from a differential module to its minimal free resolution--as demonstrated in Example \ref{ex:intro}. Our next result says essentially that when the degree of the differential module is not positive, there is a limit to this cancellation.

We prove the following which we show arises as a consequence of Avramov, Buchweitz, and Iyengar's Class Inequality \cite[Theorem 4.1]{avramov2007class}.

\begin{proposition}\label{prop:intro-n+1}
    Let $F\in \DM(S,a)$ be a free differential module with finite length homology, and assume that $a\leq 0$. Then the Betti vector $\betadm(F)$ has at least $n+1$ nonzero entries.
\end{proposition}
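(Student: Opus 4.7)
The plan is to exploit the hypothesis $a \leq 0$ to exhibit $F$ as a free flag differential module whose flag length equals the number of nonzero entries of $\betadm(F)$, and then invoke the Class Inequality to bound this length below by $n+1$. After replacing $F$ by its minimal free resolution, we may assume $F$ is itself minimal. Let $d_1 < d_2 < \cdots < d_s$ be the distinct internal degrees appearing in a minimal free basis of $F$, so that $s$ equals the number of nonzero entries of $\betadm(F)$. For each $i$, let $F^{(i)}$ denote the $S$-span of the basis elements of degree $d_i$, and set $G_i = F^{(1)} \oplus \cdots \oplus F^{(i)}$.

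The key claim is that under the hypothesis $a \leq 0$, the filtration $0 = G_0 \subset G_1 \subset \cdots \subset G_s = F$ consists of differential submodules satisfying $\del(G_i) \subseteq G_{i-1}$. To check this, let $e$ be a basis element of degree $d_j$, so that $\del(e)$ lies in internal degree $d_j + a$. The degree $d_j + a$ part of $F$ is $\bbk$-spanned by products $p \cdot e^{(l)}_k$ for $p$ a homogeneous polynomial of degree $d_j + a - d_l$, which requires $d_l \leq d_j + a$. When $a < 0$, the inequality $d_l \leq d_j + a < d_j$ forces $l < j$. When $a = 0$, the additional possibility $l = j$ is ruled out by minimality, since it would require a coefficient in $S_0 \cap \frakm = \bbk \cap \frakm = 0$. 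Either way $\del(e) \in G_{j-1}$, so each quotient $G_i/G_{i-1} \cong S(-d_i)^{\beta_{d_i}}$ inherits the zero differential.

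This realizes $F$ as a free flag differential $S$-module of length $s$, so $\operatorname{class}_S F \leq s$. Since $H(F)$ has finite length, its annihilator is $\frakm$-primary of height $n$, and the Class Inequality \cite[Theorem 4.1]{avramov2007class} yields $\operatorname{class}_S F \geq n+1$. Combining these two bounds gives $s \geq n+1$, as desired. The only subtle step in the argument is the handling of the boundary case $a=0$, where minimality is essential to rule out degree-preserving scalar contributions to $\del$; once strict lower triangularity of the filtration is established, the proposition follows immediately from the Class Inequality.
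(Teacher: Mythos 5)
Your argument is correct and is essentially the paper's own proof: after passing to the minimal free resolution, minimality together with $a\leq 0$ makes the decomposition by generating degree into a free flag structure, and the Avramov--Buchweitz--Iyengar Class Inequality then forces at least $n+1$ distinct degrees. One bookkeeping remark: under the paper's conventions a flag with $s$ pieces has $\fclass_S F \leq s-1$ and the Class Inequality gives $\fclass_S F \geq \height(\Ann H(F)) = n$ rather than $n+1$, so the correct chain is $s-1 \geq \fclass_S F \geq n$, which still yields $s\geq n+1$.
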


Recall that in standard Boij-S\"oderberg theory the extremal rays of the Betti cone come from the Betti tables of modules with \emph{pure resolutions}, those with the fewest possible nonzero entries. Over $S = \bbk[x_1, \ldots, x_n]$, Betti tables of pure resolutions have exactly $n+1$ nonzero entries. 
Since folds of pure resolutions in degree $a\leq 0$ have Betti vectors with exactly $n+1$ nonzero entries, Proposition \ref{prop:intro-n+1} gives a hint that the extremal rays of $\bsmod{S}$ might remain extremal after flattening. This would mean that the extremal rays in $\bsdm{S}{a}$ come from analogous objects as in the standard theory. This provides some motivation for Conjecture \ref{conj:intro} in the $a\leq 0$ case as well as some hope that familiar methods may be applicable.

Indeed, when we restrict to the case $a = 0$, we are able to make use of familiar methods by generalizing the categorical pairing of Eisenbud and Erman \cite{EE2017}. The categorified duality in \cite{EE2017} clarified the pairing between Betti tables of finite length $S$-modules and cohomology tables of vector bundles on $\bbP^{n-1}$ which played a crucial role in the proof of the original Boij-S\"oderberg conjectures \cite{eisenbud2008Betti}. In the original proof, Eisenbud and Schreyer show a sort of duality between $\bsmod{S}$ and the cone $C_{\text{vb}}(\bbP^{n-1})$ of cohomology tables of vector bundles on $\bbP^{n-1}$, which they then use to prove that the facet equations of the cone spanned by the pure Betti tables are nonnegative on the Betti table of any finite length $S$-module. Eisenbud and Erman then showed that there was in fact a pairing

\[
\bsmod{S}\times \cvb{\bbP^{n-1}}\to \bsmod{\bbk[t]}
\]

which captured this duality.

We generalize this pairing to the setting of degree 0 differential modules and prove analogs of two key results from \cite{EE2017}.
In particular, we construct a functor 
\[
\Phi \colon \bDDM(S,0)\times \Db(\bbP^{n-1})\to\bDDM(A,0)
\]
where $A = \bbk[t]$, $\bDDM(S,0)$ and $\bDDM(A,0)$ denote the bounded derived categories of degree 0 differential modules over $S$ and $A$, and $\Db(\bbP^{n-1})$ denotes the bounded derived category of coherent sheaves on $\bbP^{n-1}$. We show that $\Phi$ satisfies two crucial properties.

\begin{theorem}\label{thm:intro2}
Let $F\in \bDDM(S,0)$ be free and $\calE\in \Db(\bbP^{n-1})$. Then \begin{enumerate}
    \item The Betti vector of $\Phi(F,\calE)$ depends only on the Betti vector of $F$ and the absolute Hilbert function (see Definition \ref{def:absHilb}) of $\calE$.
    \item If $\tilde{F}\otimes\calE$ is exact, then $\Phi(F,\calE)$ has finite length homology.
\end{enumerate}
\end{theorem}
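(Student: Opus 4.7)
The plan is to adapt the strategy of Eisenbud and Erman \cite{EE2017} from the complex setting to differential modules. I would first revisit the explicit construction of $\Phi$ given earlier in the paper---presumably built by pairing $F$ with a Tate-like resolution of $\calE$ and then folding into a degree $0$ differential module over $A = \bbk[t]$---so that both parts of the theorem reduce to analyzing the underlying free $A$-module of $\Phi(F, \calE)$ together with its differential.

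For part (1), the plan is to proceed by additivity and $\bbk$-linearity. Decompose $F$ as a graded module $\bigoplus_j S(-j)^{\beta_j}$, where the $\beta_j$ are the entries of $\betadm(F)$. The construction of $\Phi$ should exhibit the underlying graded free $A$-module of $\Phi(F, \calE)$ as built from contributions indexed by these summands and by the cohomology groups $\bigoplus_i H^i(\bbP^{n-1}, \calE(j))$, whose dimensions are precisely the data recorded by the absolute Hilbert function of $\calE$ (Definition \ref{def:absHilb}). The differential on $\Phi(F, \calE)$ depends on the differential of $F$, but after minimizing the resulting differential module in the sense of \cite{brown2021minimal} one expects only this combinatorial data to control which entries cancel, yielding the claim.

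For part (2), use the exactness hypothesis on $\tilde{F} \otimes \calE$. The construction of $\Phi$ should be set up so that the homology $H_*(\Phi(F,\calE))$ localized away from the maximal ideal $(t) \subset A$ computes the hypercohomology of $\tilde{F}\otimes \calE$ on $\bbP^{n-1}$, essentially by unwinding the pairing via a standard affine cover of $\bbP^{n-1}$. Exactness then forces this localized homology to vanish, so $H_*(\Phi(F,\calE))$ is $t$-power torsion and hence of finite length over $A = \bbk[t]$.

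The main obstacle is part (1): controlling minimization in the differential module setting. In the complex analog the homological bigrading keeps track of which cancellations are allowed, but in the folded differential module setting that grading collapses and unit entries of the differential can appear in positions that mix data from different homological degrees---a manifestation of the cancellation phenomenon illustrated in Example \ref{ex:intro}. Verifying that the resulting cancellation pattern depends only on $(\betadm(F), \text{absHilb}(\calE))$ rather than on the particular choice of free differential module $F$ and sheaf complex $\calE$ representing this data will require careful bookkeeping of how folding interacts with the Brown-Erman minimization procedure, likely by exhibiting $\Phi(F,\calE)$ as a fold of a standard complex construction whose minimization is already understood.
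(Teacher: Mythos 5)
Your sketch of part (2) is essentially the paper's argument: the paper proves Lemma \ref{lem:sheafiso}, showing that after base change to $\bbk(t)$ the substitution $x_i\mapsto tx_i$ becomes invertible, so $\left(\Sigma F\otimes(\calE\boxtimes\calO_{\bbA^1})\right)\otimes_A\bbk(t)\simeq(\tilde F\otimes\calE)\otimes_\bbk\bbk(t)$, and then uses the compatibility of $R\pi_*$ with this flat base change to conclude that $\Phi(F,\calE)\otimes_A\bbk(t)=0$, hence the homology is torsion and of finite length. That matches your plan, up to making the identification with hypercohomology precise via the commuting square of pushforward and localization.

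For part (1), however, there is a genuine gap, and it sits exactly where you flag the ``main obstacle'': you leave the control of minimization as an open bookkeeping problem, saying one ``expects'' the cancellation pattern to depend only on $(\betadm(F),\gamma(\calE))$. The paper's proof works because \emph{no cancellation occurs at all}, and the mechanism for this is the one ingredient your proposal never identifies: the functor $\Sigma$ replaces each entry $f$ of the differential of $F$ by $t^{\deg f}f$. After first replacing $F$ by its minimal free resolution (so every entry $f$ has positive degree), every entry of $\Sigma\del$ lies in $(t)$. One then computes $R\pi_*$ by taking the \v{C}ech bicomplex of the expansion; since each column is a complex of $\bbk$-vector spaces tensored with twists of $A$, the vertical maps split, and by the degeneration lemma \cite[Lemma 3.5]{Eisenbud_2003} the totalization is homotopic to a $1$-periodic complex on the hypercohomology modules $\bigoplus_j H^{tot}(\calE(-j))^{\betadm_j(F)}\otimes_\bbk A(-j)$ whose induced differential is built from compositions of $\Sigma\del$ with the splittings, hence still has all entries in $(t)$. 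The resulting differential $A$-module is therefore already minimal, and the Betti vector is read off as the exact product formula $\betadm_j(\Phi(F,\calE))=\betadm_j(F)\,\gamma_{-j}(\calE)$ --- a sharper statement than ``depends only on.'' Without the observation that $\Sigma$ forces $t$-divisibility of the differential (and without first minimizing $F$), your approach cannot rule out the kind of unit-entry cancellation you correctly worry about, so the proof as proposed does not close. Relatedly, your guess that $\Phi$ is built from a Tate-like resolution of $\calE$ is not how the paper sets it up; the pairing is $R\pi_*\left(\Sigma F\otimes^{DM}(\calE\boxtimes\calO_{\bbA^1})\right)$ for the projection $\bbP^{n-1}_A\to\bbA^1$, and the $\Sigma$-twist is precisely the feature that makes part (1) go through.
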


Thus our $\Phi$ also induces a pairing 
\[
\bsdm{S}{0}\times\cvb{\bbP^{n-1}}\to\bsdm{A}{0}.
\]


The existence of this pairing leads us to conjecture the following analog of the duality displayed in \cite[Theorem 0.2]{EE2017}.

\begin{conjecture}
    Let $\bfb\in \bbB$. Then the following are equivalent:
    \begin{enumerate}
        \item $\bfb = \betadm(F)$ for a degree 0 differential module $F$ over $S$ with finite length homology.
        \item For every vector bundle $\calE$ on $\bbP^{n-1}$, $\bfb$ pairs with the absolute Hilbert function of $\calE$ to give the Betti vector of a differential module in $\DM(A,0)$ with finite length homology.
    \end{enumerate}
\end{conjecture}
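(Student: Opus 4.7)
The plan is to handle the two implications separately. The forward direction $(1) \Rightarrow (2)$ should follow fairly directly from the pairing $\Phi$ constructed in Theorem \ref{thm:intro2}, while the backward direction $(2) \Rightarrow (1)$ is the substantive half; it appears to require enough of Conjecture \ref{conj:intro} at $a = 0$ to pin down the extremal rays of $\bsdm{S}{0}$, together with an Eisenbud--Schreyer-style construction of vector bundles realizing facet equations.

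For $(1) \Rightarrow (2)$: given $F \in \DM(S,0)$ with finite length homology, the sheafification $\tilde{F}$ is a complex of coherent sheaves on $\bbP^{n-1}$ with vanishing homology, since finite length $S$-modules are supported at the irrelevant ideal. For any vector bundle $\calE$, flatness of $\calE$ gives that $\tilde{F} \otimes \calE$ is exact, so Theorem \ref{thm:intro2}(ii) yields $\Phi(F, \calE) \in \DM(A, 0)$ with finite length homology. Theorem \ref{thm:intro2}(i) then guarantees that $\betadm(\Phi(F, \calE))$ depends only on $\betadm(F)$ and the absolute Hilbert function of $\calE$, giving the desired pairing.

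For $(2) \Rightarrow (1)$: the strategy would mirror the classical Eisenbud--Schreyer/Eisenbud--Erman approach. First I would show that $\Phi$ descends to a numerical pairing $\bsdm{S}{0} \times \cvb{\bbP^{n-1}} \to \bsdm{A}{0}$; since $A = \bbk[t]$, the target cone is tractable (indeed Theorem \ref{thm:intro-a>0} and its $a=0$ analog give substantial control here). The hypothesis in (2) then supplies a family of nonnegativity conditions on a candidate $\bfb$, and the goal is to argue that these conditions cut out exactly the cone $\bsdm{S}{0}$. Carrying this out requires two ingredients: (a) a description of the extremal rays of $\bsdm{S}{0}$, which would follow from Conjecture \ref{conj:intro} combined with Proposition \ref{prop:intro-n+1} (the latter forcing extremal rays to have exactly $n+1$ nonzero entries, consistent with folds of pure resolutions), and (b) for each facet of this cone, an explicit vector bundle realizing the corresponding facet equation via the pairing---the analog of supernatural bundles.

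The hard part will be step (b). In the classical setting, supernatural bundles give explicit cohomology tables that pair with pure Betti tables to yield the facet equations of $\bsmod{S}$. For differential modules, we must pair folds of pure resolutions with supernatural (or suitably modified) bundles and verify the resulting vectors in $\bsdm{A}{0}$ are themselves extremal---i.e.\ that equality holds exactly on the facet. The flattening phenomenon from Example \ref{ex:intro} complicates this: distinct pure Betti tables in $\bsmod{S}$ can fold to the same vector in $\bsdm{S}{0}$, so the facet structures of the two cones are not in bijection. Overcoming this likely requires either a refined invariant on the vector bundle side finer than the absolute Hilbert function, or an argument that the fibers of the flattening map are themselves exposed by suitable $\calE$. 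Consequently I expect a complete proof of this duality to be entangled with the resolution of Conjecture \ref{conj:intro} at $a = 0$, with progress on either conjecture likely informing the other.
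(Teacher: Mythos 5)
This statement is labeled as a conjecture in the paper, and the paper offers no proof of it --- only the motivation that the pairing $\Phi$ exists and descends to cones. So there is no ``paper's own proof'' to match your proposal against; what can be assessed is whether your outline correctly separates what is provable from what is open, and it does. Your argument for $(1)\Rightarrow(2)$ is essentially the paper's implicit justification that $\Phi$ induces a map $\bsdm{S}{0}\times\cvb{\bbP^{n-1}}\to\bsdm{A}{0}$: replacing $F$ by its minimal free resolution, finite length homology forces the sheafification $\tilde F$ to be exact (check this via the $1$-periodic expansion of the differential module, where exactness is ordinary exactness of a complex and is preserved by tensoring with the locally free $\calE$), and then Theorems \ref{thm:exactfl} and \ref{thm:phiformula} give finite length homology of $\Phi(F,\calE)$ and the formula $\betadm_j(\Phi(F,\calE))=\betadm_j(F)\gamma_{-j}(\calE)$. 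That direction is correct and complete.

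The direction $(2)\Rightarrow(1)$ is the open content of the conjecture, and your proposal does not prove it --- nor does the paper. Your strategy (describe the facets of the cone spanned by pure Betti vectors, then realize each facet equation as $\tau_j\circ\Phi(-,\calE)$ or $\sigma_j\circ\Phi(-,\calE)$ for suitable supernatural $\calE$) is exactly the program the paper sketches in its final section, so the outline is aligned with the intended approach; but steps (a) and (b) are both unresolved, and you correctly flag that they are entangled with Conjecture \ref{conj:dmbs} at $a=0$. Two small corrections: the ``substantial control'' over the target cone $\bsdm{A}{0}$ comes from Theorem \ref{thm:1var} (the $\tau_j,\sigma_j$ facet description over $\bbk[t]$ in degree $0$), not from Theorem \ref{thm:intro-a>0}, which concerns positive degree and yields the full positive orthant rather than a nontrivial facet structure; and Proposition \ref{prop:intro-n+1} gives only a lower bound of $n+1$ nonzero entries, so it is consistent with, but does not force, extremal rays having exactly $n+1$ nonzero entries --- the paper explicitly notes that ruling out extra cancellation here is nontrivial.
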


Finally, we prove that Conjecture \ref{conj:intro} holds for degree 0 differential modules over $A$, explicitly:

\begin{theorem}\label{thm:intro3}
Every degree 0 differential modules over $A = \bbk[t]$ with finite length homology has a Betti vector that can be expressed as a positive rational combination of Betti vectors of folds of pure resolutions whose degree sequences form a chain.
\end{theorem}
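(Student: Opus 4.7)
The plan is to reduce the problem to a concrete structural statement about minimal free differential modules over $A = \bbk[t]$, and then finish with a short combinatorial re-pairing argument that produces the chain condition.

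Step 1 (Structure theorem). Let $(F, \partial)$ be a minimal free resolution of the given differential module; $F$ has finite length homology since the resolution is a quasi-isomorphism. I would first show that $F$ decomposes as a direct sum of pure folds: there exist integers $\delta_i$ and $e_i \geq 1$ with $F \cong \bigoplus_i (A(-\delta_i) \oplus A(-\delta_i - e_i))$ as differential modules, where on each summand $\partial$ sends the generator of $A(-\delta_i - e_i)$ to $t^{e_i}$ times the generator of $A(-\delta_i)$. The argument exploits that $A$ is a graded PID. Both $\ker \partial$ and $\operatorname{im} \partial$ are graded submodules of the graded free module $F$, hence graded free of the same rank (since $\ker \partial / \operatorname{im} \partial$ is torsion). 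The short exact sequence $0 \to \ker \partial \to F \to \operatorname{im} \partial \to 0$ splits gradedly because $\operatorname{im} \partial$ is free, so $F \cong \ker \partial \oplus C$ with $\partial|_C \colon C \xrightarrow{\sim} \operatorname{im} \partial$. Applying graded Smith normal form to the inclusion $\operatorname{im} \partial \hookrightarrow \ker \partial$ produces compatible graded bases $\{k_i\}$ of $\ker \partial$ and $\{t^{e_i} k_i\}$ of $\operatorname{im} \partial$. Lifting via $\partial|_C^{-1}$ to elements $c_i \in C$, each summand $A k_i \oplus A c_i$ is exactly the pure fold with degree sequence $(\deg k_i, \deg k_i + e_i)$. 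Minimality of $\partial$ forces $e_i \geq 1$ on every summand.

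Step 2 (Chain re-pairing). Writing $\betadm(F) = \sum_{i=1}^r (e_{\delta_i} + e_{\delta_i + e_i})$ from Step 1, I would sort all $2r$ endpoints in nondecreasing order as $t_1 \leq t_2 \leq \cdots \leq t_{2r}$ and form the \emph{midpoint pairing} consisting of the pairs $(t_i, t_{i+r})$ for $i = 1, \ldots, r$. By construction $\sum_{i=1}^r(e_{t_i} + e_{t_{i+r}})$ recovers the original Betti vector, so this will give the desired decomposition once validity and the chain property are checked. The chain property is immediate: both $t_i$ and $t_{i+r}$ are nondecreasing in $i$. To verify each pair is a valid degree sequence, i.e.\ $t_i < t_{i+r}$, let $l_{(1)} \leq \cdots \leq l_{(r)}$ and $h_{(1)} \leq \cdots \leq h_{(r)}$ denote the sorted lows $\delta_i$ and highs $\delta_i + e_i$. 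A direct counting argument gives $t_i \leq l_{(i)}$ and $t_{i+r} \geq h_{(i)}$, and the strict middle inequality $l_{(i)} < h_{(i)}$ follows because each of the $r - i + 1$ original pairs with low $\geq l_{(i)}$ contributes a high strictly greater than $l_{(i)}$, forcing at most $i - 1$ highs to lie at or below $l_{(i)}$.

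The main obstacle is Step 1: verifying that the graded splitting of $F$ and the Smith normal form for the inclusion $\operatorname{im} \partial \hookrightarrow \ker \partial$ go through in the graded category over $A$, and that minimality of $\partial$ indeed forces each $e_i \geq 1$ (rather than allowing some trivial summand with $e_i = 0$). Step 2 is purely combinatorial and actually produces an integer chain decomposition, which is more than the positive rational combination required by the theorem.
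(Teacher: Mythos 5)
Your proof is correct, and it reaches the conclusion by a genuinely different route than the paper. The paper sandwiches three cones, $B\subseteq T\subseteq C\subseteq B$, where $T$ is cut out by the explicit functionals $\tau_j,\sigma_j$ of (\ref{eq:facets}): the inclusion $T\subseteq C$ (with the chain condition) is proved by a greedy simplicial decomposition algorithm, and $B\subseteq T$ is checked by reducing, via the classification of finite length modules over a PID and Brown--Erman's anchored flag resolutions, to the summands $A(-p)/(t^q)$. Your Step 1 recovers essentially the same structural fact --- the minimal free resolution is a direct sum of pure folds $A(-\delta_i)\oplus A(-\delta_i-e_i)$ with $e_i\geq 1$ --- but proves it by hand with a graded splitting and Smith normal form rather than by citation, and your Step 2 replaces the algorithm entirely with the midpoint re-pairing of the $2r$ sorted endpoints. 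I checked the combinatorics: $t_i\leq l_{(i)}<h_{(i)}\leq t_{i+r}$ does follow from your counting argument (the strict inequality $l_{(i)}<h_{(i)}$ uses $e_i\geq 1$, which is exactly where minimality enters), the pairs are nondecreasing in both coordinates, and the multiset of endpoints is preserved, so the decomposition is valid, integral, and forms a chain. What your argument buys is a shorter, self-contained proof that yields an integer (not merely rational) chain decomposition. What it does not give is the identity $B=T$, i.e.\ the halfspace description of the cone by the functionals $\tau_j$ and $\sigma_j$; the paper needs that description later (Section \ref{sec:predict}), where these functionals are composed with the categorical pairing $\Phi$ to predict facets of $\bsdm{S}{0}$, so your proof would not fully substitute for Section \ref{sec:1var} even though it proves Theorem \ref{thm:1var} itself.
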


This result is part of what gives the categorical pairing its power. With a complete description of the extremal rays and facets of the Betti cone over $A$, the pairing $\Phi$ gives a machine for proving that a certain class of linear functionals are nonnegative on every Betti vector in $\bsdm{S}{0}$ by reducing to $\bsdm{A}{0}$ as a sort of ``base case". Proving this nonnegativity is one of the main steps in a proof of Conjecture \ref{conj:dmbs} in the general case. 
What ultimately remains standing in the way of a full proof of Conjecture \ref{conj:intro} for degree 0 is a combinatorial description of the cone spanned by the pure Betti vectors. The authors of \cite{eisenbud2008Betti} describe a simplicial fan structure which gives a facet description of the cone for finite length $S$-modules. While we do this for our base case of $\bsdm{A}{0}$, we have not yet been able to replicate this for $\bsdm{S}{0}$ more generally, though computational data enables us to conjecture that the extremal facets arise similarly as in \cite{EE2017}. Once such a description is proven, we can use the categorical pairing to prove that the equations of these facets are actually nonnegative on all Betti vectors of degree 0 differential modules with finite length homology.

In the case where $a<0$, we are forced to abandon some of the methods that generalized nicely from complexes to degree 0 differential modules, such as the categorical pairing. However, the positive results we have obtained in the degree $\geq 0$ cases lead us to believe that it is natural to extend Conjecture \ref{conj:intro} to the negative degree case as well. That being said, in this setting, new methods appear to be needed to tackle the conjecture and computational data has thus far proved perplexing.

\subsection*{Acknowledgements} The author would like to thank Daniel Erman for invaluable guidance, suggestions, and feedback throughout this work, and Michael Brown for helpful conversations contributing to this project. Additionally, the computer algebra system Macaulay2 \cite{M2} provided crucial insight and examples.

\section{Background}

For the rest of the paper, our convention will be to assume that $R$ is graded-local with maximal ideal $\frakm$ and residue field $\bbk = R/\frakm$.

\begin{definition}
For a ring $R$, a \emph{differential $R$-module} is a pair $(D,\del)$ where $D$ is a module over $R$ and $\del\colon D\to D$ is an $R$-module endomorphism squaring to 0. In the case where $D$ is graded, we may define, for any $a\in \bbZ$, a \emph{degree $a$ differential module} over $R$ to be a differential module $(D,\del)$ where $\del\colon D\to D(a)$ is homogeneous. We denote the category of degree $a$ differential modules over $R$ by $\DM(R,a)$. A \emph{morphism} of differential modules is a module map that respects the differential, that is a map $f\colon (D,\del)\to (D',\del')$ satisfying $f\del = \del'f$.
\end{definition}

Note that the data of a differential $R$-module $(D,\del)$ is equivalent to the data of a graded module over the ring $R[\varepsilon]/(\varepsilon^2)$ with $\deg(\varepsilon) = a$, where the action of $\varepsilon$ corresponds to the action of the differential $\del$. This implies that $\DM(R,a)$ is an abelian category.

\begin{definition}
For a differential module $(D,\del)\in \DM(R,a)$, the \emph{homology} of $D$ is \[H(D) = \ker(\del)/\Ima(\del)(-a)\]. A morphism of differential modules is a \emph{quasi-isomorphism} if it induces an isomorphism on homology.
\end{definition}

Differential modules are in a sense a generalization of chain complexes. In particular, any chain complex may be considered as a differential module where the underlying module is the direct sum of the modules in the complex and the differential is the differential from the complex. More precisely, we have a functor 

\begin{align*}
\fold_a\colon &\ch(R)\to \DM(R,a)\\
& (C_\bullet, \del)\mapsto \left(\bigoplus C_i(ia), \del \right).
\end{align*}

In general, there is no way to consistently define a tensor product of two differential modules. However, we can define the tensor product of a differential module with a \emph{complex} as follows. Given a differential module $D\in \DM(R,a)$ and a graded degree $a$ complex $C_\bullet$ of $R,R'$-bimodules, the tensor product $D\otimes^{DM}_R C_\bullet\in \DM(R',a)$ is the differential module whose underlying module is $\fold_a(D\otimes_R C_\bullet)$ and whose differential is given by 
\[
d\otimes c\mapsto d\otimes \del^C(c) + (-1)^{\deg(c)}c\otimes \del^D(d).
\]

Since we may regard any $R$-module as a complex concentrated in degree 0, this defines a tensor product $D\otimes^{DM}_R N$ for any $R$-module $N$.

While a differential module is a generalization of a chain complex, passing from complexes to more general differential modules destroys the homological grading which is key to proving many results for complexes. However, we are able to regain some of the benefits of a homological grading for certain differential modules defined below.

\begin{definition}
A \emph{flag} is a differential module $(F, \del)$ of the form $F = \bigoplus F_i$ where \[\del(F_i)\subset \bigoplus_{j<i}F_j\]. A flag $F$ is \emph{free/projective} if each of the $F_i$ is free/projective. For a differential module $F$, the \emph{free/projective class} of $F$ is 
\[
\min \left\{n\in\bbN \text{ such that } F \text{ admits a free/projective flag structure } F = \bigoplus_{i=0}^n F_i\right\}.
\]
The free class of $F$ over a ring $R$ is denoted $\fclass_R{F}$. If no flag structure on $F$ exists, then we say $\fclass_R{F} = \infty$
\end{definition}

\begin{theorem}[\cite{avramov2007class} Class Inequality]\label{thm:classineq}
Let $D$ be a finitely generated differential module over the polynomial ring $S = \bbk[x_1, \ldots, x_n]$. Then
\[
\fclass_S{D}\geq\height(\Ann H(D)).
\]
\end{theorem}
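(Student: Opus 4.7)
The plan is to reduce the Class Inequality to the classical New Intersection Theorem of Peskine--Szpiro--Hochster--Roberts for bounded complexes of finitely generated free modules. First, I would replace $D$ by a free flag differential module $F = \bigoplus_{i=0}^c F_i$ realizing the minimal free class $c := \fclass_S D$ (so that $H(F) \cong H(D)$ and in particular $\Ann H(F) = \Ann H(D)$). Next, I would localize at a minimal prime $\frakp$ of $\Ann H(F)$, which reduces the problem to the setting where $S_\frakp$ is local, $H(F)_\frakp$ is of finite length, the induced flag on $F_\frakp$ still has length at most $c$, and the target inequality becomes $\dim S_\frakp \leq c$.

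From this reduction, the goal is to extract from the flag structure a bounded free complex of length at most $c$ whose homology is nonzero of finite length, at which point the New Intersection Theorem would supply the bound directly. The natural candidate is the \emph{leading term} complex: decomposing the differential as $\del = \del_1 + \del_2 + \cdots$ with $\del_k(F_i) \subseteq F_{i-k}$, the relation $\del^2 = 0$ restricted to top order forces $\del_1^2 = 0$, yielding a bounded free complex $(F_\bullet, \del_1)$ of length at most $c$. The flag filtration $F_{\leq p} := \bigoplus_{i \leq p} F_i$ then produces a convergent spectral sequence with $E_1 = H(F_\bullet, \del_1)$ abutting to $\gr H(F)$, and since $H(F) \neq 0$ one concludes that $H(F_\bullet, \del_1) \neq 0$ as well.

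The main obstacle is controlling the \emph{support} of $H(F_\bullet, \del_1)$: the spectral sequence exhibits $\gr H(F)$ only as a subquotient of $H(F_\bullet, \del_1)$, giving $\Ann H(F_\bullet, \del_1) \subseteq \Ann H(F)$, which is the wrong direction for transferring finite-length support from $H(F)$ to the complex. To repair this, I would follow the Avramov--Buchweitz--Iyengar strategy of passing to the derived category: a free flag of length $c$ realizes $F$ as an object of level at most $c+1$ in the triangulated category $\Db(S)$ (i.e., built from $S$ by $c$ iterated cones), and the target inequality then follows from an abstract triangulated version of the New Intersection Theorem for such objects. That abstract intersection theorem is itself proved by tensoring with a Koszul complex on a system of parameters for $S_\frakp$ and counting homological dimensions via Serre vanishing, so that the classical NIT may be invoked on the resulting bounded free complex. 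The hard step is the triangulated-category intersection theorem, because it is the step that genuinely uses the flag (rather than merely its associated graded) to recover the correct direction of the annihilator inequality.
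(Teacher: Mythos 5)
This statement is quoted from \cite{avramov2007class} as background; the paper gives no proof of it, so there is nothing internal to compare your argument against. Judged on its own terms, your outline gets the framing right: the reduction to a minimal free flag, the localization at a minimal prime of $\Ann H(D)$ achieving the height (so that the claim becomes $c\geq\dim S_\frakp$ with $H(F)_\frakp$ nonzero of finite length), and the observation that $\del_1^2=0$ gives a length-$c$ free complex whose homology receives $\gr H(F)$ as a subquotient via the flag spectral sequence, are all correct (modulo the harmless indexing point that for a flag the linear part is the $d_1$ differential, so the relevant page is $E_2$). Most importantly, you correctly diagnose the crux: the spectral sequence transfers nonvanishing but not finite length of homology from $H(F)$ to $H(F_\bullet,\del_1)$, since the annihilator inclusion goes the wrong way; this is exactly why the Class Inequality is not a formal consequence of the classical New Intersection Theorem applied to the associated graded complex.

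The gap is that your proof bottoms out precisely at that crux. The "repair" consists of invoking a triangulated New Intersection Theorem for objects of bounded level, which is essentially the theorem being proved (it is the content of \cite{avramov2007class}, later systematized in the Avramov--Buchweitz--Iyengar--Miller work on levels, which postdates the cited paper). Your one-sentence account of how that step is established -- tensoring with a Koszul complex on a system of parameters and ``Serre vanishing'' -- does not actually describe a working argument: the known proofs of the finite-length level/class inequality rest on substantially heavier input (big Cohen--Macaulay modules in equicharacteristic, or Roberts' intersection theorem in general), and the passage from the flag to a bounded free complex with \emph{finite length} homology is exactly the step you identified as problematic and then deferred. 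So what you have is an accurate roadmap of the reduction together with a correct identification of the hard step, but not a proof of that step; for the purposes of this paper, which treats the Class Inequality as a black box, citing \cite{avramov2007class} for it is the right thing to do.
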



Given a (free) flag $F$, we can always extract certain complexes from $F$. First observe that a free flag $F$ over $R$ consists of the data of a family of free $R$-modules $\{F_i\}_{i\in\bbN}$ and a family of morphisms $\{A_{i,j}\colon F_i\to F_j\}_{i>j}$ such that 
\[
\sum_{i>j>k} A_{j,k}A_{i,j} = 0.
\]
In particular, this requires that $A_{i,i-1}A_{i+1,i} = 0$ for all $i\geq 1$, so we there is a free complex
\[
\bfF = \quad F_0\xleftarrow{A_{1,0}} F_1\xleftarrow{A_{2,1}} F_2\leftarrow \cdots
\]
\begin{definition} In the setting detailed above, we say that the flag $\left(\bigoplus_{i\geq 0} F_i, \{A_{i,j}\}\right)$ is \emph{anchored by} the complex $\bfF$. This definition was given in \cite{BV22}.
\end{definition}

One main goal in the study of differential modules has been to generalize constructions and results for complexes to the differential module setting. Key progress in this endeavor was made by Brown and Erman \cite{brown2021minimal} in their definition of free flag and minimal free resolutions of differential modules, which we reiterate here.

\begin{definition}
Let $D$ be a differential module over a ring $R$. A \emph{free flag resolution} of $D$ is a quasi-isomorphism $F\to D$ where $F$ is a free flag.
\end{definition}

A result of \cite{brown2021minimal} highlights an important link between a differential module $D$ and the minimal free resolution of its homology. Their theorem says that \emph{every} differential module has a free flag resolution anchored on the minimal free resolution of its homology. Following the terminology of \cite{BV22}, we call these free flag resolutions \emph{anchored free flags}.



\begin{definition}\label{def:min}
We say a differential $R$-module $(M,\del)$ is \emph{minimal} if $\del(M)\subseteq \frakm M$. For a differential $R$-module $D$,   \emph{minimal free resolution} of $D$ is a quasi-isomorphism $M\to D$ that factors through a splitting map $M\to F$ where $F$ is a free flag resolution of $D$ and $M$ is minimal.
\end{definition}

It is a further result of \cite{brown2021minimal} that for differential modules admitting a finitely generated free flag resolution, minimal free resolutions exist, are finitely generated, and are unique up to differential module isomorphism.

We can define certain numerical invariants associated to a differential module. In particular, we have a parallel notion of the Betti numbers of graded differential module:

\begin{definition}
Let $D$ be a differential $R$-module with minimal free resolution $F$ and let $N$ be an $R$-module. Then we define 
\[
\Tor_{DM}^R(D,N) := H(F\otimes_R^{DM} N).
\]
The \emph{Betti numbers} of $D$ are then defined as 
\[
\betadm_j(D) = \dim_{\bbk}\Tor_{DM}^R(D, \bbk)_j
\]
\end{definition}

\begin{remark}\label{rmk:degjgens}
One important observation is that, if $M$ is a minimal free resolution of $D$, then $\betadm_j(D)$ is equal to the number of generators of the degree $j$ part of $M$ as a $\bbk$-vector space (see \cite[Remark 1.3(4)]{brown2021minimal}).
\end{remark}

Recall that a Betti diagram is called \emph{pure} if it has at most one nonzero entry in each column, and that a resolution with a pure Betti diagram is called a \emph{pure resolution}. We say that a Betti vector is pure if it is the Betti vector of the fold of a pure resolution. 

Each pure Betti vector has an associated \emph{degree sequence}, an increasing sequence of integers corresponding to the degrees of the nonzero entries in the Betti vector. We put a partial order on the set of degree sequences of length $\ell$ by saying that $\bfa = (a_0<a_1<\cdots < a_\ell)\leq \bfb = (b_0<b_1<\cdots < b_\ell)$ if $a_i\leq b_i$ for every $i$.

\section{From Free Complexes to Differential Modules}
In this section, we compare the theory of Betti tables of free resolutions and free complexes to that of Betti vectors of differential modules and propose a generalization of the Boij-S\"oderberg conjectures for differential modules.\

Let $S = \bbk[x_1, \ldots, x_n]$ for any fixed field $\bbk$. We denote the space of \emph{rational Betti vectors} by $\bbB = \bigoplus_{j\in\bbZ}\bbQ$. As in the case of the original Boij-S\"oderberg theory, our goal is to describe the rational cone spanned by the vectors arising as the Betti vectors of differential modules with finite length homology. 

We begin by exploring how Betti vectors of differential modules over $S$ are related to Betti tables of free $S$-complexes. First note that by Remark \ref{rmk:degjgens} it is enough to consider minimal free resolutions of differential modules, and by \cite[Theorem 3.2, Theorem 4.2]{brown2021minimal} we may assume that up to differential module isomorphism any minimal free resolution of a differential module $D$ is a summand of an anchored free flag resolution $F\to D$ where $F$ is the fold of the minimal free resolution of the homology $H(D)$. We thus have that $\betadm_j(D)$ is at most the number of degree $j$ generators of $F$, which is in turn the sum $\sum_{i}\beta_{i,j}(H(D))$. 

\begin{definition}
Let $(\beta_{i,j})$ be the Betti table of a module $M$ over $S$. The \emph{degree $a$ flattening} of $(\beta_{i,j})$ is the vector $\bfb\in \bigoplus_{j\in\bbZ}\bbQ$ defined by 
\[
\bfb_j = \sum_{i} \beta_{i,ai+j}
\]
\end{definition}

\begin{remark}\label{rmk:foldflat}
Combining the definitions of flattening and folding, we get that for $\bfF$ a minimal free resolution of $M$, the Betti vector of  $~\fold_a(\bfF)$ is the degree $a$ flattening of the Betti table $(\beta_{i,j})$ of $M$.
\end{remark}

Remark \ref{rmk:foldflat} says that for certain differential modules $D$, the Betti vector is equal to the flattening of the Betti table of the $H(D)$. While this is always the case when $D$ is quasi-isomorphic to its homology, it is not true for all differential modules. We saw this in Example \ref{ex:intro} with a degree 2 differential module, but such behavior also occurs in other degrees, as we see with the following.

\begin{example}\label{ex:cancel} 
Let $R = \bbk[x,y]$ and let $D\in\DM(R,0)$ have underlying module 
\[
R\oplus R(-2) \oplus R(-1)^2\oplus R(-3)^2\oplus R(-2)\oplus R(-4)
\]
and differential given by the matrix
\[
\begin{bmatrix}
       0 & 0 & x & y & 0 & 0 & 0 & 0\\
       0 & 0 & 0 & 0 & x & y & 1 & 0\\
       0 & 0 & 0 & 0 & 0 & 0 & -y & 0\\
       0 & 0 & 0 & 0 & 0 & 0 & x & 0\\
       0 & 0 & 0 & 0 & 0 & 0 & 0 & -y\\
       0 & 0 & 0 & 0 & 0 & 0 & 0 & x\\
       0 & 0 & 0 & 0 & 0 & 0 & 0 & 0\\
       0 & 0 & 0 & 0 & 0 & 0 & 0 & 0
    \end{bmatrix}.
\]
The homology $H(D)$ is isomorphic to $\bbk\oplus\bbk(-2)$ which has Betti table 
\[
\begin{matrix}
       &0&1&2\\\text{0:}&1&2&1\\\text{1:}&\text{-}&\text
       {-}&\text{-}\\\text{2:}&1&2&1\\\end{matrix}
\]
This Betti table has degree 0 flattening $(1,2,2,2,1,0,\ldots)$, but the differential module $D$ is not minimal, so this is not the Betti vector of $D$. Rather, $D$ has Betti vector $(1,2,0,2,1)$. This reflects some ``cancellation" of the $R(-2)$ summands that occurs when passing from $D$ to its minimal free resolution.
\end{example}

\begin{remark}
In fact, a careful analysis (which we omit here) shows that there is no finite length graded module over $\bbk[x,y]$ whose Betti table flattens to $(1,2,0,2,1)$. However, there is a free \emph{complex} with finite length homology that gives this Betti vector. The complex in question has homology $\bbk$ in degree 0 and $\bbk(-2)$ in degree 1. Furthermore, the fold of this complex is isomorphic to the minimal free resolution of the differential module in the example. We note also that there \emph{is} a module whose Betti table flattens to a rational multiple of the vector $(1,2,0,2,1)$, so this vector lives on a ray contained in the Betti cone for finite length $S$-modules.
\end{remark}

Example \ref{ex:cancel} shows that there can be some difference between the Betti vector of a differential module and the Betti vector of the fold of the minimal free resolution of its homology. To understand the Betti cone of differential modules, we want to know how big this difference can be, that is, how much ``cancellation" can occur.
It turns out that the answer to this question depends substantially on the degree of the differential module, with the cases of positive degree and nonpositive degree exhibiting quite different properties. We thus divide the remainder of our discussion into those two cases.

\subsection{Differential modules of degree $a\leq 0$}
We begin with the nonpositive case, which we will see is the case that more closely resembles the behavior of complexes and resolutions. The following result says, in essence, that there is a limit to how much cancellation can occur in the Betti vector so long as the degree of the differential module is not positive.

\begin{proposition}\label{prop:n+1degs}
Let $F\in\DM(S,a)$ be a free differential module with finite length homology for some $a\leq 0$. Then $F$ is generated in at least $n+1$ distinct degrees.
\end{proposition}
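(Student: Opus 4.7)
The plan is to combine the Class Inequality (Theorem \ref{thm:classineq}) with the hypothesis $a \le 0$ to extract a free flag structure on $F$ whose length is bounded above by the number of distinct generator degrees.

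First I would reduce to the case where $F$ is minimal. Any free differential module splits as $F = M \oplus T$ with $M$ minimal and $T$ a direct sum of exact trivial pieces; since $H(F) = H(M)$ and the generator degrees of $M$ form a subset of those of $F$, it suffices to prove the statement for $M$.

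Now assume $F$ is minimal, and let $d_1 < d_2 < \cdots < d_s$ be the distinct degrees in which $F$ has generators. I would group generators by degree, defining $F_i$ (for $i = 0, 1, \ldots, s-1$) to be the free submodule of $F$ generated by the generators of degree $d_{i+1}$, so that $F = \bigoplus_{i=0}^{s-1} F_i$. The central step is to verify that this decomposition is a free flag, that is, $\del(F_i) \subset \bigoplus_{j<i} F_j$. The generators of $F_i$ sit in degree $d_{i+1}$, so their images under $\del$ lie in degree $d_{i+1} + a$. Minimality forces $\del(F) \subset \frakm F$, and any element of $\frakm F$ in degree $e$ is an $S$-linear combination of generators of $F$ of degree strictly less than $e$. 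Since $a \le 0$, we have $d_{i+1} + a \le d_{i+1}$, so these images are supported on generators of degree strictly less than $d_{i+1}$, namely those of degrees $d_1, \ldots, d_i$, which generate $\bigoplus_{j<i} F_j$ as an $S$-module. Thus the flag condition holds.

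Having produced a free flag on $F$ indexed by $0, 1, \ldots, s-1$, the definition of free class yields $\fclass_S(F) \le s - 1$. On the other hand, since $H(F)$ has finite length, $\Ann H(F)$ is $\frakm$-primary and thus has height $n$, so the Class Inequality gives $\fclass_S(F) \ge n$. Combining these inequalities forces $s \ge n + 1$. I expect the main subtlety to lie in the flag-verification step; one must check that the degree inequality handles both $a = 0$ and $a < 0$ uniformly, and indeed this is precisely the step that breaks when $a > 0$ (since then $d_{i+1} + a$ may exceed $d_{i+1}$), consistent with the genuine cancellation observed in the positive-degree examples.
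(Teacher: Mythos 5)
Your proposal is correct and follows essentially the same route as the paper: use minimality together with $a\le 0$ to turn the grading by generator degrees into a free flag structure, then invoke the Class Inequality and the fact that $\Ann H(F)$ has height $n$ to force at least $n+1$ flag pieces. The only difference is that you spell out the reduction to the minimal case and the flag verification in more detail than the paper does, which is fine.
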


\begin{proof}Let $(F,\del)$ be a minimal, free, graded differential module generated in degrees $d_0, \ldots, d_\ell$ and let $H = H(F)$, the homology of $F$. Then the grading gives a natural flag structure $F = \bigoplus_i F_i$ where $F_i$ is the piece of $F$ generated in degree at least $d_i$. This defines a flag structure on $F$, since minimality and nonpositivity of $a$ ensures that $\del(F_i)\subset \bigoplus_{i<j}F_j$. Applying the Class Inequality (Theorem \ref{thm:classineq}), $\ell\geq \height \Ann(H)$ which is equal to $n$ by the finite length assumption. So $F$ must be generated in at least $n+1$ distinct degrees.
\end{proof}

Note that in the case of finite length modules over $S$, any Betti table with exactly $n+1$ nonzero entries is the Betti table of a module with a pure resolution and, furthermore, Betti table is determined up to scalar multiple by the choice of nonzero entries. What we have shown for differential modules is weaker than this, but we conjecture that a similar phenomenon holds. That is, based on experimentation, we guess that the choice of nonzero entries determines the Betti vector up to scalar multiple, and that any Betti vector with exactly $n+1$ nonzero entries belongs to a differential module with pure homology. The reason this is nontrivial is that, due to the type of cancellation we have observed in Betti vectors, one could imagine a differential module $D$ with non-pure homology $H$ where some of the nonzero entries in the Betti table for $H$ cancel out, causing the number of nonzero entries in $\betadm(D)$ to be exactly $n+1$.

\begin{remark}\label{rmk:apos}
Proposition \ref{prop:n+1degs} relies very heavily on the degree $\leq 0$ condition. We have already seen that for differential modules in degree $a> 0$ it may fail spectacularly. We will return to this case at the end of this section.
\end{remark}

In degree $\leq 0$, Proposition \ref{prop:n+1degs} tells us that differential modules with finite length homology cannot be generated in fewer degrees than minimal free complexes with finite length homology. In particular, their Betti vectors have at least as many nonzero entries as the degree 0 flattenings of Betti tables of pure resolutions. On the other hand, we know that any flattening of a Betti table of a finite length module over $S$ exists as the Betti vector of a degree 0 differential module with finite length homology. This leads us to suspect a Boij-S\"oderberg type conjecture for differential modules.

\begin{conjecture}\label{conj:dmbs}
Every differential module over $S$ with homology of finite length has a Betti vector that can be expressed as a positive rational combination of Betti vectors of folds of pure resolutions with finite length homology whose degree sequences form a chain.
\end{conjecture}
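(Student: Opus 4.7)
The plan is to adapt the Eisenbud-Schreyer/Eisenbud-Erman strategy for the original Boij-S\"oderberg conjectures to the differential module setting. I would first split the argument by the sign of $a$: the positive case is already handled by Theorem~\ref{thm:intro-a>0}, so the substantive work lies in the range $a\leq 0$. By Proposition~\ref{prop:n+1degs} every Betti vector in $\bsdm{S}{a}$ for $a\leq 0$ has at least $n+1$ nonzero entries, matching exactly the count achieved by folds of pure resolutions. This supports modeling $\bsdm{S}{a}$ as a simplicial fan whose extremal rays are spanned by pure Betti vectors and whose maximal chambers are indexed by chains of degree sequences, in parallel with the classical picture.

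Following the Eisenbud-Schreyer template, I would decompose an arbitrary $\bfb\in\bsdm{S}{a}$ by a greedy algorithm: at each step, subtract the largest positive rational multiple of the pure Betti vector whose degree sequence is componentwise smallest among those compatible with $\bfb$. If the resulting difference remains inside $\bsdm{S}{a}$, its supporting degree sequence is strictly larger, so iterating terminates at $0$ and yields a chain decomposition of the required form. The decisive ingredient is therefore to verify that every facet equation of the cone spanned by the pure Betti vectors is nonnegative on every Betti vector of a differential module with finite length homology.

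For the subcase $a=0$, this facet nonnegativity is exactly the target of the categorical pairing $\Phi\colon \bDDM(S,0)\times \Db(\bbP^{n-1})\to\bDDM(A,0)$ constructed in Theorem~\ref{thm:intro2}. The idea is to realize each conjectural facet equation as a linear functional of the form $\bfb\mapsto \langle \bfb,\text{absolute Hilbert function of }\calE\rangle$ for a suitably chosen vector bundle $\calE$ on $\bbP^{n-1}$. Theorem~\ref{thm:intro2}(2) then places the value of this pairing inside $\bsdm{A}{0}$, and Theorem~\ref{thm:intro3}, together with a facet description of the one-variable cone, forces nonnegativity there. Transporting nonnegativity back through $\Phi$ would yield the required facet inequalities on $\bsdm{S}{0}$ and so validate each step of the greedy decomposition.

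The main obstacle is twofold. First, an intrinsic combinatorial description of the pure-vector cone inside $\bsdm{S}{0}$ is still missing: because of the cancellation phenomenon illustrated in Example~\ref{ex:cancel}, this cone is \emph{not} simply the image of $\bsmod{S}$ under flattening, so its facets must be identified directly and then matched with absolute Hilbert functions of vector bundles on $\bbP^{n-1}$, as in \cite{EE2017}. Second, for $a<0$ the pairing $\Phi$ has no obvious analog, since we cannot normalize the differential to degree $0$; here one would need either a new invariant playing the role of the absolute Hilbert function for negative degree, or a reduction (perhaps via a twist or an auxiliary flag structure) that allows the $a=0$ machinery to handle the facet-nonnegativity step. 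Overcoming these two points is, in my view, the hardest part of the plan.
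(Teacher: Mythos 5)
There is a fundamental issue to flag first: the statement you are addressing is a \emph{conjecture} that the paper itself does not prove. The paper establishes it only in two special cases --- for differential modules of degree $a>0$ (via the explicit construction of extremal rays from folds of Koszul complexes on $(x_1^a,\ldots,x_n^a)$) and for degree $0$ differential modules over the one-variable ring $A=\bbk[t]$ (Theorem \ref{thm:1var}, via Algorithm \ref{alg:decomp}). Your proposal is essentially a restatement of the paper's own roadmap from Sections \ref{sec:duality}--\ref{sec:predict}: split by the sign of $a$, use the pairing $\Phi$ to transport facet functionals to $\bsdm{A}{0}$, and run a greedy chain decomposition. You also correctly identify the two missing ingredients. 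The problem is that those missing ingredients are not loose ends to be tidied up; they are precisely why the conjecture is open, so the proposal cannot be accepted as a proof.

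Concretely, three steps of your outline are unsupported. First, the greedy decomposition argument presupposes that the cone cut out by the (unknown) facet inequalities coincides with the cone spanned by pure Betti vectors and carries a simplicial fan structure indexed by chains of degree sequences; the paper establishes this only for $n=1$, where the facets $\tau_j,\sigma_j$ of (\ref{eq:facets}) are written down explicitly and the two-phase algorithm is proved to terminate. For $n>1$ even the candidate list of facets is only experimental (the Fourier--Motzkin data in (\ref{eq:facetmatrix})), and because of the cancellation phenomenon of Example \ref{ex:cancel} one cannot simply import the Eisenbud--Schreyer fan. Second, the claim that each facet functional can be realized as $\tau_j\circ\Phi(-,\calE)$ for a supernatural bundle $\calE$ is itself stated only as a conjecture in Section \ref{sec:predict}; Theorems \ref{thm:phiformula} and \ref{thm:exactfl} give nonnegativity only of those functionals that happen to arise this way, and it is not established that these suffice to cut out the pure cone. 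Third, for $a<0$ no analog of $\Phi$ exists, and the paper explicitly states that new methods appear to be needed there. In short, your text is a faithful sketch of the intended strategy and a fair account of its obstacles, but it proves nothing beyond what Theorems \ref{thm:intro-a>0} and \ref{thm:1var} already give.
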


Conjecture \ref{conj:dmbs} posits that the rational cone of Betti vectors of differential modules with finite length homology is the ``flattening" of the cone of Betti tables of finite length modules over $S$. That is, \emph{up to scalar multiple}, every Betti vector of a differential module with finite length homology can be obtained by taking the flattening the Betti table of a finite length module over $S$, as we saw in Example \ref{ex:cancel}. 
We return to this example now in light of Conjecture \ref{conj:dmbs} and compare the pure decomposition of the Betti vector to that of the Betti table of the homology.

\begin{example}\label{ex:pure-decomp}
    Let $R = \bbk[x,y]$ and $(D,\del)$ be as in Example \ref{ex:cancel}. The Betti table of the homology has pure decomposition
    \[
\begin{matrix}
       &0&1&2\\\text{0:}&1&2&1\\\text{1:}&\text{-}&\text
       {-}&\text{-}\\\text{2:}&1&2&1\\\end{matrix} \quad =\quad \begin{matrix}
       &0&1&2\\\text{0:}&1&2&1\\\text{1:}&\text{-}&\text
       {-}&\text{-}\\\text{2:}&\text{-}&\text
       {-}&\text{-}\\\end{matrix} \quad+\quad \begin{matrix}
       &0&1&2\\\text{0:}&\text{-}&\text
       {-}&\text{-}\\\text{1:}&\text{-}&\text
       {-}&\text{-}\\\text{2:}&1&2&1\\\end{matrix}
    \]
while the pure decomposition of $\betadm(D)$ is
    \[
\betadm(D) = \frac{1}{2}(2^\circ, 3, 0, 1, 0) + \frac{1}{2}(0^\circ, 1, 0, 3, 2)
    \]

    On the other hand, consider the differential module $(D', \del')$ where $D' = D$ is the same underlying module, but where $\del'$ is now given by the matrix 
\[
\begin{bmatrix}
       0 & 0 & x & y & 0 & 0 & 0 & 0\\
       0 & 0 & 0 & 0 & x & y & 0 & 0\\
       0 & 0 & 0 & 0 & 0 & 0 & -y & 0\\
       0 & 0 & 0 & 0 & 0 & 0 & x & 0\\
       0 & 0 & 0 & 0 & 0 & 0 & 0 & -y\\
       0 & 0 & 0 & 0 & 0 & 0 & 0 & x\\
       0 & 0 & 0 & 0 & 0 & 0 & 0 & 0\\
       0 & 0 & 0 & 0 & 0 & 0 & 0 & 0
    \end{bmatrix}
\]
(this matrix is \emph{almost} the same, except that the 1 has been replaced with a 0). The homology of $(D', \del')$ is the same as the homology of $(D, \del)$, but the Betti vector is 
\[
\betadm(D') = (1^\circ, 2, 2, 2, 1) = (1^\circ, 2, 1, 0, 0) + (0^\circ, 0, 1, 2, 1).
\]
\end{example}

In particular, the pure decomposition of the Betti vector of a differential module does not necessarily come from the pure decomposition of the Betti table of its homology. 

\subsection{Differential modules with positive degree}
We now return to the case where our differential modules have positive degree. As we will see, this case demonstrates behavior that differs starkly from that of $\DM(S,a)$ for $a\leq 0$, so much so that we may handle it with methods that are much more direct and elementary. We begin with an example of Remark \ref{rmk:apos}.

\begin{example}\label{ex:deg1koszul}
Let $R = \bbk[x,y]$ and consider the degree 1 differential module $R^4$ with differential 
\[
 \del = \begin{bmatrix}
0 & x & y & 0\\
0 & 0 & 0 & -y\\
0 & 0 & 0 & x\\
0 & 0 & 0 & 0
\end{bmatrix} \colon R^4\to R^4(1).
\]
This differential module is a minimal free flag, so its Betti numbers are the number of generators in each degree. That is, its Betti vector is $(4,0,0,\ldots)$.
\end{example}

One consequence of Example \ref{ex:deg1koszul} is that the rational cone of Betti vectors of degree 2 differential modules over $\bbk[x,y]$ with finite length homology includes the entire positive orthant. It turns out that this is not an isolated phenomenon.

\begin{proposition}\label{prop:wholeorthant}
For any $a>0$, the rational cone of Betti vectors of degree $a$ differential modules over $S$ with finite length homology is equal to the positive orthant in $\bbB$.
\end{proposition}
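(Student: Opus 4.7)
The plan is to verify the two inclusions separately. The containment of $\bsdm{S}{a}$ in the positive orthant is immediate, since every $\betadm_j(D)$ is a nonnegative integer by definition. For the reverse inclusion it suffices to realize, up to positive rational multiple, each standard basis vector $e_j \in \bbB$ (for $j \in \bbZ$) as the Betti vector of some degree $a$ differential module with finite length homology, because these basis vectors generate the positive orthant as a rational cone.

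For the construction I would generalize Example \ref{ex:deg1koszul}. Fix an integer $j$ and consider the free $S$-module $D_j$ with basis $\{e_I\}_{I \subseteq \{1,\dots,n\}}$, placing every basis element in internal degree $j$. Since $a > 0$, the sequence $x_1^a, x_2^a, \ldots, x_n^a$ is a regular sequence contained in $\frakm$, and I would equip $D_j$ with the Koszul-style endomorphism
\[
\del(e_I) = \sum_{i \in I} (-1)^{\sigma(i,I)} \, x_i^a \, e_{I \setminus \{i\}},
\]
where $\sigma(i, I)$ is the usual Koszul sign. Then $\del^2 = 0$ by the Koszul identity, $\del$ raises internal degree by exactly $a$ since each $x_i^a$ has degree $a$ and every $e_I$ sits in degree $j$, so $D_j \in \DM(S, a)$, and all entries of $\del$ lie in $\frakm$, so $D_j$ is minimal. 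The filtration of $D_j$ by $|I|$ exhibits it as a free flag anchored by the honest Koszul complex $K(x_1^a, \ldots, x_n^a)$.

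Forgetting the internal grading identifies $(D_j, \del)$ with the fold of the Koszul complex on the regular sequence $x_1^a, \ldots, x_n^a$; since the higher Koszul homologies vanish, the differential module homology $H(D_j)$ is isomorphic, up to grading shift, to $S/(x_1^a, \ldots, x_n^a)$, which is $\frakm$-primary and hence of finite length. Because $D_j$ is minimal and free it serves as its own minimal free resolution, so by Remark \ref{rmk:degjgens} we obtain $\betadm(D_j) = 2^n \cdot e_j$. Letting $j$ range over $\bbZ$ and taking positive rational combinations yields the entire positive orthant, completing the argument.

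The only subtlety in executing this plan is the grading bookkeeping: one must confirm that collapsing the homological grading of the Koszul complex into a single internal degree really produces an object of $\DM(S, a)$ with the advertised Betti vector, and that replacing the linear forms $x_i$ with their $a$-th powers $x_i^a$ correctly accounts for the degree of the differential. Once these conventions are pinned down the proof reduces to the standard fact that the Koszul complex on a regular sequence is acyclic in positive homological degree.
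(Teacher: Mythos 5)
Your proposal is correct and follows essentially the same route as the paper: both arguments realize each standard basis vector (up to the factor $2^n$) as the Betti vector of a twist of $\fold_a$ of the Koszul complex on $(x_1^a,\ldots,x_n^a)$, observing that the fold is minimal, free, concentrated in a single internal degree, and has finite length homology. Your version simply writes out the folded Koszul differential and its verification in more explicit detail.
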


\begin{proof}
It suffices to show that the standard basis vectors are in the cone, i.e. that every standard basis vector is a rational multiple of the Betti vector of some $D\in\DM(S,a)$ with finite length homology. To show this, we show that we can get multiples of basis vectors as the degree $a$ folds of certain minimal free complexes.  

Let $\bfK$ be the koszul complex on $(x_1^a, x_2^a, \ldots, x_n^a)$, and define $K$ to be $\fold_a(\bfK)\in \DM(S,a)$. Then $K$ has underlying module $S^{2^n}$ and has minimal differential, so its Betti vector is $(2^n, 0, 0, \ldots)$. By twisting appropriately, we obtain a differential module whose Betti vector has $2^n$ in the $i^{th}$ entry and $0$ elsewhere.
\end{proof}

\begin{remark}
Any resolution may be folded into a degree $a$ differential module for any $a$, and the resulting Betti vector varies for different values of $a$. We can picture the vectors coming from pure resolutions as moving around while $a$ varies. For positive $a$, some of these vectors ``flare out" as far as possible resulting in a cone that fills the entire positive orthant, losing some of its combinatorial structure.
\end{remark}

As an immediate corollary of Proposition \ref{prop:wholeorthant}, we obtain the following:

\begin{theorem}
    Conjecture \ref{conj:dmbs} is true for $a>0$.
\end{theorem}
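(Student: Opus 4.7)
The plan is to deduce this immediately from Proposition \ref{prop:wholeorthant}. Since that proposition identifies $\bsdm{S}{a}$ with the entire positive orthant in $\bbB$, the task reduces to exhibiting, for each standard basis vector $e_j \in \bbB$, a pure resolution with finite length homology whose degree-$a$ fold has Betti vector equal to a positive rational multiple of $e_j$, and then checking that as $j$ varies the resulting degree sequences form a chain in the componentwise partial order.

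First I would revisit the explicit construction used in the proof of Proposition \ref{prop:wholeorthant}. Let $\bfK$ be the Koszul complex on the regular sequence $(x_1^a,\ldots,x_n^a)$; this is a pure resolution of the finite length module $S/(x_1^a,\ldots,x_n^a)$ with degree sequence $(0, a, 2a, \ldots, na)$. Twisting gives a pure resolution $\bfK(-j)$ of $S/(x_1^a,\ldots,x_n^a)(-j)$, still finite length, with degree sequence $(j,\, a+j,\, 2a+j,\, \ldots,\, na+j)$. A direct computation, using that the $i\th$ term of $\bfK$ is $S(-ia)^{\binom{n}{i}}$, shows that $\fold_a(\bfK(-j))$ has underlying graded module $S(-j)^{2^n}$ with minimal (Koszul-type) differential, so its Betti vector equals $2^n\, e_j$.

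It remains to check the chain condition on degree sequences and assemble the decomposition. If $j \le j'$, then $ka + j \le ka + j'$ for every $k$, so $(j, a+j, \ldots, na+j) \le (j', a+j', \ldots, na+j')$, meaning the entire family $\{(j, a+j, \ldots, na+j)\}_{j\in\bbZ}$ is totally ordered; any finite subfamily is therefore a chain. Given $\bfb \in \bsdm{S}{a}$, Proposition \ref{prop:wholeorthant} places $\bfb$ in the positive orthant, so it has only finitely many nonzero entries $b_j \ge 0$ and decomposes as
\[
\bfb \;=\; \sum_j b_j\, e_j \;=\; \sum_j \frac{b_j}{2^n}\, \betadm\!\bigl(\fold_a(\bfK(-j))\bigr),
\]
which is the required positive rational combination of Betti vectors of folds of pure resolutions whose degree sequences form a chain. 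There is no real obstacle here: all of the substantive content sits in Proposition \ref{prop:wholeorthant}, and the remainder is bookkeeping around the Koszul complex on the $a\th$ powers.
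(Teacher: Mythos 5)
Your proof is correct and follows essentially the same route as the paper: reduce to Proposition \ref{prop:wholeorthant}, realize each basis direction via the Koszul complex on $(x_1^a,\ldots,x_n^a)$ suitably shifted, and note the degree sequences form a chain. Your use of internal twists $\bfK(-j)$ (rather than the paper's homological shifts) is if anything a slightly more careful way to hit every basis vector $\bfe_j$, but the substance is identical.
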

\begin{proof}
    We will prove this by constructing the extremal rays of the cone explicitly. By Proposition \ref{prop:wholeorthant}, extremal rays of the Betti cone are spanned by the standard basis vectors $\bfe_i$. The ray generated by $e_i$ contains the Betti vector of $\fold_a(\bfK(x_1^a,\ldots,x_n^a)[-i])$. Since the Koszul complex on $(x_1^a,\ldots, x_n^a)$ is pure of type $(0,a,2a,\ldots, na)$ with finite length homology, the Betti cone is spanned by Betti vectors of differential modules whose homology have pure resolutions. Furthermore, the shifted degree sequences of these resolutions form a chain.
\end{proof}

\section{Categorified Duality}\label{sec:duality}

In this section, we restrict to the case where $a = 0$. We give a generalization of the categorical pairing described in \cite{EE2017}. Our version is a pairing between the derived category of differential $S$-modules and the derived category of coherent sheaves on $\bbP^{n-1}$ and takes values in the derived category of differential $A$-modules where $A = \bbk[t]$.

\begin{definition}
By formally inverting all quasi-ismorphisms in $\DM(S,a)$, we form the \emph{derived category of differential $S$-modules}, denoted $\DDM(S,a)$. The subcategory of $\DDM(S,a)$ whose objects have finitely generated homology is the \emph{bounded derived category of differential $S$-modules}, denoted $\bDDM(S,a)$.
\end{definition}

\begin{notation}
We denote by $\DM(\bbP^{n-1})$ the category of sheaves of differential $\calO_{\bbP^{n-1}}$-modules, and denote the corresponding derived and bounded derived categories in the same way as above. We denote the bounded derived category of sheaves of (not necessarily differential) $\calO_{\bbP^{n-1}}$-modules by $\Db(\bbP^{n-1})$. \end{notation}

A key ingredient of the categorical pairing is the derived pushforard of a differential module, which we now describe here in some detail, following the same idea as \cite[pp. 11-12]{ToricTate}.

\begin{definition}[Derived pushforward of a differential module]\label{def:pshfwd}

Let $(\calD, \delta)$ be a representative of a class in $\bDDM(\bbP^{n-1}_A)$. Take $\pi\colon \bbP^{n-1}_A = \bbP^{n-1}\otimes\bbA^1 \to \bbA^1$ to be projection. 

    We exploit the equivalence in \cite[Remark 2.3]{ToricTate} between the category of differential modules over a ring $R$ and the category of 1-periodic $R$-complexes. Taking the expansion of $(\calD, \delta)$ we get a 1-periodic complex $\textbf{Ex}(\calD) = $ 

    \[
\cdots\leftarrow \calD\xleftarrow{\delta}\calD\xleftarrow{\delta}\calD\leftarrow\cdots
    \]

    Now we compute the derived pushforward $R\pi_*$ of this complex. In each position we take the \v{C}ech resolution of $\calD$ with respect to the usual affine cover of $\bbP^{n-1}$ which gives a bicomplex

\begin{figure}[H]
\begin{tikzcd}
\cdots & \calC^{n-1} \arrow[l, "\pm\delta"']       & \calC^{n-1} \arrow[l, "\pm\delta"']       & \calC^{n-1} \arrow[l, "\pm\delta"']       & \cdots \arrow[l, "\pm\delta"'] \\
\cdots & \vdots \arrow[l, "\delta"'] \arrow[u]  & \vdots \arrow[l, "\delta"'] \arrow[u]  & \vdots \arrow[l, "\delta"'] \arrow[u]  & \cdots \arrow[l, "\delta"'] \\
\cdots & \calC^1 \arrow[l, "-\delta"'] \arrow[u] & \calC^1 \arrow[l, "-\delta"'] \arrow[u] & \calC^1 \arrow[l, "-\delta"'] \arrow[u] & \cdots \arrow[l, "-\delta"'] \\
\cdots & \calC^0 \arrow[l, "\delta"'] \arrow[u] & \calC^0 \arrow[l, "\delta"'] \arrow[u] & \calC^0 \arrow[l, "\delta"'] \arrow[u] & \cdots \arrow[l, "\delta"']
\end{tikzcd}\end{figure}

where the vertical arrows are the maps in the \v{C}ech resolution and the horizontal arrows are induced by the differential $\delta$ (and thus still square to 0). We apply $\pi_*$ to this complex and totalize. Since the \v{C}ech complex is 0 after $n$ steps, each diagonal of the bicomplex above contains one copy each of $\calC^0, \calC^1, \ldots, \calC^{n-1}$, so this yields a 1-periodic complex

\[
\cdots\leftarrow\bigoplus_{i=0}^{n-1}\pi_*(\calC^i)\leftarrow\bigoplus_{i=0}^{n-1}\pi_*(\calC^{i})\leftarrow\bigoplus_{i=0}^{n-1}\pi_*( \calC^{i})\leftarrow \cdots
\]

We consider this as a sheaf of differential $\calO_{\bbA^{1}}$-modules, and the class represented by this object is what we define to be $R\pi_*(\calD)$.
\end{definition}

\begin{remark}
    The degree 0 condition is necessary in this definition of $R\pi_*$. In particular, we need the total complex after taking the \v{C}ech resolution to be one-periodic, but this fails to be the case if the twists of $\calD$ are not the same at each spot, since twisting a sheaf can genuinely change its cohomology.
\end{remark}

With this definition in hand, we now define our categorical pairing. Let $\varphi\colon S\to S\otimes_{\bbk} A$ be the homomorphism defined by $x_i\mapsto tx_i$. This defineds an $S$-module structure on $S\otimes_\bbk A = S[t]$ which in turn yields an action of $\bbP^{n-1}$ on $\bbP^{n-1}_A$. Using this action, we define for $F\in \DM(S)$, a bigraded sheaf of differential $\calO_{\bbP^{n-1}_A}$-modules $\Sigma F = \Tilde{F}\otimes^{DM}_{\bbP^{n-1}}\calO_{\bbP^{n-1}_A}$, where the tensor product is taken using the structure given by $\varphi$. If we write the differential $\del$ on $F$ as a matrix, the differental $\Sigma\del$ on $\Sigma F$ is given by the matrix obtained from $\del$ by replacing every entry $f$ by $t^{\deg(f)}f$. The purpose of this step is that it will allow us to push forward to $A$ while still retaining all of the graded data of our differential module over $S$.

\begin{definition}For $F\in\bDDM(S,0)$ and $\calE\in \Db(\bbP^{n-1})$, we define the functor \[\Phi\colon \bDDM(S,0)\times\Db(\bbP^{n-1})\to \bDDM(A,0)\] by
\[\Phi(F,\calE) = R\pi_*\left(\Sigma F\otimes^{DM}_{\bbP^{n-1}_A}(\calE\boxtimes\calO_{\bbA^1})\right).
\] where $\pi$ is the projection $\bbP^{n-1}\times\bbA^1\to\bbA^1$.
\end{definition}

\begin{definition}\label{def:absHilb}
    Let $\gamma(\calE)$ denote the vector whose $j\th$ entry is the sum of the ranks of the hypercohomology modules of $\calE(j)$. 
    We define the \emph{absolute Hilbert function} of $\calE$ to be the function $j\mapsto \gamma_j(\calE)$. This should be thought of as a cousin of the Hilbert polynomial of $\calE$, but where the Euler characteristic is replaced by the sum $\sum_{i\geq 0}h^i(\calE(j))$. 
\end{definition}

\begin{theorem}\label{thm:phiformula}
$\betadm(\Phi(F,\calE))$ depends only on $\betadm(F)$ and $\gamma(\calE)$, in particular the Betti vector of $\Phi(F,\calE)$ is given by the formula
\[
\betadm_j(\Phi(F,\calE)) = \betadm_j(F)\gamma_{-j}(\calE)
\]
\end{theorem}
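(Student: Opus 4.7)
My plan is to compute $\Phi(F,\calE)$ up to quasi-isomorphism and read off its Betti numbers from the underlying $A$-module of a minimal model. First I reduce to the case where $F$ is a minimal free differential $S$-module with homogeneous basis $\{e_\alpha\}$, $e_\alpha$ of $S$-degree $d_\alpha$; then $\betadm_j(F)=\#\{\alpha:d_\alpha=j\}$, and minimality forces each nonzero entry $\del_{\beta\alpha}\in S_{d_\alpha-d_\beta}$ of the differential to satisfy $d_\alpha-d_\beta\geq 1$.

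Next I identify the underlying graded $A$-module of $\Phi(F,\calE)$ by reducing to the single-generator case and using additivity. For $F=S(-d)$, the twist $\varphi(x_i)=tx_i$ is engineered precisely so that $R\pi_*(\Sigma F\otimes(\calE\boxtimes\calO_{\bbA^1}))$ has underlying $A$-module $\bigoplus_i H^i(\bbP^{n-1},\calE(-d))\otimes_\bbk A(-d)$; this is a free $A$-module of rank $\gamma_{-d}(\calE)$ with all generators in $A$-degree $d$. The appearance of the shift $A(-d)$ is what encodes the original $S$-grading after pushforward, and it can be checked directly using flat base change along the projection $\pi\colon\bbP^{n-1}_A\to\bbA^1$ together with the bigraded structure inherited from $\varphi$. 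For general minimal $F$, the underlying sheaf $\widetilde{\Sigma F}\otimes(\calE\boxtimes\calO_{\bbA^1})$ decomposes additively over $\alpha$, and since $R\pi_*$ is additive the underlying $A$-module of $\Phi(F,\calE)$ is $\bigoplus_\alpha A(-d_\alpha)^{\gamma_{-d_\alpha}(\calE)}$.

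Then I verify minimality of this model. The differential on $\Phi(F,\calE)$ has two contributions: the \v{C}ech differentials from the pushforward, and the induced maps from $\Sigma\del$. The \v{C}ech direction can be collapsed onto cohomology without introducing new generators, and the $(\alpha,\beta)$-block of the remaining contribution is $t^{d_\alpha-d_\beta}$ times the cohomology map induced by $\del_{\beta\alpha}\colon\calE(-d_\alpha)\to\calE(-d_\beta)$. Minimality of $F$ gives $d_\alpha-d_\beta\geq 1$ whenever $\del_{\beta\alpha}\neq 0$, so every nonzero entry lies in $(t)=\frakm_A$, and the model is minimal. Remark \ref{rmk:degjgens} then gives
\[
\betadm_j(\Phi(F,\calE))=\#\{\alpha:d_\alpha=j\}\cdot\gamma_{-j}(\calE)=\betadm_j(F)\cdot\gamma_{-j}(\calE).
\]

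The main technical obstacle will be rigorously collapsing the \v{C}ech direction of the totalized bicomplex onto cohomology while preserving the $t$-divisibility of the $\Sigma\del$-induced differential. In the differential-module setting, where no homological grading is available to truncate along, this requires a careful spectral-sequence or filtration argument on the bicomplex underlying $R\pi_*$. Once this step is justified, the formula follows from additivity and the minimality argument above, and the statement that $\betadm(\Phi(F,\calE))$ depends only on $\betadm(F)$ and $\gamma(\calE)$ is an immediate consequence.
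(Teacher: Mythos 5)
Your proposal is correct and follows essentially the same route as the paper: reduce to a minimal free model of $F$, decompose the underlying sheaf additively into pieces $\calE(-d_\alpha)\boxtimes A(-d_\alpha)$, push forward via the \v{C}ech bicomplex, collapse onto hypercohomology, and conclude minimality from the fact that every induced entry is divisible by $t^{d_\alpha-d_\beta}$ with $d_\alpha-d_\beta\geq 1$. The ``main technical obstacle'' you flag is resolved in the paper by noting that after applying $\pi_*$ the \v{C}ech columns become split complexes of $\bbk$-vector spaces tensored with twists of $A$, so the standard cancellation lemma (\cite[Lemma 3.5]{Eisenbud_2003}) collapses the totalization onto cohomology with an induced differential given by compositions of $\Sigma\del$ with the splittings---all divisible by $t$, exactly as your minimality argument requires.
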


\begin{proof}
Replacing $F$ with its minimal free resolution, we may assume that up to quasi-isomorphism that its underlying module is $F = \bigoplus_{j\in\bbZ}S(-j)^{\betadm(F)_j}$. Let $\calF = \Sigma F\otimes^{DM}_{\bbP^{n-1}_A} (\calE\boxtimes\calO_{\bbA^1})$. We can rewrite the underlying module of $\calF$ as
\begin{align*}
   \calF &= \Sigma F\otimes_{\bbP^{n-1}_A} (\calE\boxtimes\calO_{\bbA^1})\\
    &= \left(\bigoplus_{j\in\bbZ}\calO_{\bbP^{n-1}_A}(-j,-j)^{\betadm(F)_j}\right)\otimes_{\bbP^{n-1}_A}( \calE\boxtimes A)\\
    &=\bigoplus_{j\in\bbZ}\left(\calO_{\bbP^{n-1}}(-j)\boxtimes A(-j) \right)^{\betadm(F)_j}\otimes_{\bbP^{n-1}_A} (\calE\boxtimes A)\\
    &= \bigoplus_{j\in\bbZ}\left( \calE(-j)^{\betadm(F)_j}\boxtimes A(-j)\right)
\end{align*}

By Definition \ref{def:pshfwd}, to compute $R\pi_*(\calF)$ we expand $\calF$ to a one-periodic complex, resolve it with a \v{C}ech resolution at each step, then compute $\pi_*$. Letting $\calC^i(-)$ denote the $i\th$ sheaf in the \v{C}ech resolution using the standard affine cover of $\bbP^{n-1}$, we get the following bicomplex

\begin{figure}[H]
\begin{tikzcd}
\cdots & \bigoplus_{j\in\bbZ}\calC^{n-1}(\calE(-j)^{\betadm(F)_j})\boxtimes A(-j) \arrow[l, "\pm\delta"']       & \bigoplus_{j\in\bbZ}\calC^{n-1}(\calE(-j)^{\betadm(F)_j})\boxtimes A(-j)  \arrow[l, "\pm\delta"']       & \cdots \arrow[l, "\pm\delta"'] \\
 & \vdots  \arrow[u]  & \vdots  \arrow[u]    &  \\
\cdots & \bigoplus_{j\in\bbZ}\calC^{1}(\calE(-j)^{\betadm(F)_j})\boxtimes A(-j) \arrow[l, "-\delta"'] \arrow[u] & \bigoplus_{j\in\bbZ}\calC^{1}(\calE(-j)^{\betadm(F)_j})\boxtimes A(-j) \arrow[l, "-\delta"'] \arrow[u]  & \cdots \arrow[l, "-\delta"'] \\
\cdots & \bigoplus_{j\in\bbZ}\calC^{0}(\calE(-j)^{\betadm(F)_j})\boxtimes A(-j) \arrow[l, "\delta"'] \arrow[u] & \bigoplus_{j\in\bbZ}\calC^{0}(\calE(-j)^{\betadm(F)_j})\boxtimes A(-j)\arrow[l, "\delta"'] \arrow[u] &  \cdots \arrow[l, "\delta"']
\end{tikzcd}
\end{figure}

Now we apply $\pi_*$, but since every sheaf in the bicomplex is a box tensor of a sheaf on $\bbP^{n-1}$ with a twist of the structure sheaf on $\bbA^1$ and $\pi$ is a proper map, applying $\pi_*$ to $\calC^i(\calE(-j)^{\betadm(F)_j})\boxtimes A(-j)$ amounts to taking global sections of $\calC^i(\calE(-j)^{\betadm(F)_j})$ tensored with the appropriate twists of $A$. This turns each column into a direct sum of complexes $\Gamma\left(\bbP^{n-1}, \calC^\bullet(\calE(-j))\right)\otimes_\bbk A(-j)$. Since  $\Gamma\left(\bbP^{n-1}, \calC^\bullet(\calE(-j))\right)$ is a complex of $\bbk$-vector spaces, the vertical maps in the bicomplex are split. By \cite[Lemma 3.5]{Eisenbud_2003}, the totalization of the bicomplex is homotopic to 

\[
\cdots \leftarrow \bigoplus_{j\in\bbZ}H^{tot}(\calE(-j)^{\betadm(F)_j})\otimes_\bbk A(-j) \leftarrow \bigoplus_{j\in\bbZ}H^{tot}(\calE(-j)^{\betadm(F)_j})\otimes_\bbk A(-j) \leftarrow \cdots
\]
where $H^{tot}$ denotes the sum of the hypercohomology modules. The differential is a sum of maps $H^i\to H^{(j<i)}$, where each map comes from taking repeated compositions of the horizontal map $\delta$ with the section that splits the vertical map. In particular, this complex is minimal and 1-periodic. The minimal free differential $A$-module that this corresponds to is a representative of the class $R\pi_*(\calF)\in\bDDM(A,0)$, and we can compute its Betti vector by counting the number of generators of the underlying module in each degree. We have

\[
R\pi_*(\calF) \simeq \bigoplus_{j\in\bbZ}H^{tot}(\calE(-j)^{\betadm(F)_j})\otimes_\bbk A(-j) =
\bigoplus_{j\in\bbZ}A(-j)^{\betadm(F)_j\gamma(\calE)_{-j}}
\]

So the Betti vector of $\Phi(F,\calE)$ has $j\th$ entry 
\[
\betadm_j(\Phi(F,\calE)) = \betadm_j(F)\gamma_{-j}(\calE).
\]

\end{proof}

\begin{corollary}
Let $F\in \bDDM(S)$ and $\calE\in\Db(\bbP^{n-1})$. Then 
\begin{enumerate}
    \item $\betadm_j(F) = \betadm_j(\Phi(F, \calO_{\bbP^{n-1}}(j)).$
    \item $\gamma_j(\calE) = \betadm_{-j}(\Phi(S(j), \calE)).$
\end{enumerate}
\end{corollary}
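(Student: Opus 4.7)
The plan is to derive both identities directly from the formula in Theorem \ref{thm:phiformula}, which states that
\[
\betadm_j(\Phi(F,\calE)) = \betadm_j(F)\,\gamma_{-j}(\calE).
\]
In each part, one of the two factors on the right-hand side will reduce to $1$ after a short computation, and the corresponding identity will follow immediately.

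For part (i), I would substitute $\calE = \calO_{\bbP^{n-1}}(j)$. By Definition \ref{def:absHilb}, the factor $\gamma_{-j}(\calO_{\bbP^{n-1}}(j))$ is the sum of the ranks of the hypercohomology modules of $\calO_{\bbP^{n-1}}(j)(-j) = \calO_{\bbP^{n-1}}$. Since $H^0(\bbP^{n-1},\calO_{\bbP^{n-1}}) = \bbk$ and all higher cohomology of the structure sheaf on projective space vanishes, this quantity equals $1$, so the formula specializes to $\betadm_j(\Phi(F,\calO_{\bbP^{n-1}}(j))) = \betadm_j(F)$.

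For part (ii), I would substitute $F = S(j)$ and evaluate the formula at the index $-j$. Since $S(j)$ is free and concentrated in a single internal degree, it is its own minimal free resolution as a differential module with zero differential, and its only nonzero Betti number is $\betadm_{-j}(S(j)) = 1$. Substituting into the formula yields
\[
\betadm_{-j}(\Phi(S(j),\calE)) = \betadm_{-j}(S(j))\,\gamma_j(\calE) = \gamma_j(\calE),
\]
as required.

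There is no substantive obstacle here beyond correctly tracking the shift conventions---verifying that the twist in (i) really cancels to leave $\calO_{\bbP^{n-1}}$, and that the unique generator of $S(j)$ sits in degree $-j$ rather than $j$. The content of the corollary is that $\Phi$ genuinely categorifies both numerical invariants: pairing $F$ with an appropriate twist of the structure sheaf recovers the Betti vector $\betadm(F)$, while pairing an appropriate twist of $S$ with $\calE$ recovers the absolute Hilbert function $\gamma(\calE)$.
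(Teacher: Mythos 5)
Your proposal is correct and matches the paper's (implicit) argument: the corollary is stated without proof as an immediate specialization of Theorem \ref{thm:phiformula}, and your two substitutions---checking that $\gamma_{-j}(\calO_{\bbP^{n-1}}(j))=1$ since $\calO_{\bbP^{n-1}}$ has only $H^0=\bbk$, and that $S(j)$ contributes a single generator in internal degree $-j$ so $\betadm_{-j}(S(j))=1$---are exactly the computations that justify it.
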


\begin{lemma}\label{lem:sheafiso} 
As sheaves of differential $\calO_{\bbP^{n-1}_{\bbk(t)}}$-modules, there is an isomorphism 
\[
\left(\Sigma F\otimes_{\bbP^{n-1}_A}(\calE\boxtimes\calO_{\bbA^1})\right)\otimes_A \bbk(t) \simeq \left(\Tilde{F}\otimes_{\bbP^{n-1}_\bbk}\calE\right)\otimes_{\bbk}\bbk(t).
\]
\end{lemma}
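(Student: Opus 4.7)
The plan is to construct the claimed isomorphism explicitly by exploiting the fact that the ring map $\varphi\colon S \to S\otimes_\bbk A$ sending $x_i \mapsto tx_i$ becomes an isomorphism after inverting $t$. More precisely, the map $\psi\colon \bbk(t)[x_1,\ldots,x_n] \to \bbk(t)[x_1,\ldots,x_n]$ defined by $x_i \mapsto tx_i$ is a graded $\bbk(t)$-algebra automorphism with inverse $x_i \mapsto t^{-1}x_i$, and the base change of $\varphi$ to $\bbk(t)$ factors as $\psi$ composed with the standard inclusion $S \hookrightarrow \bbk(t)[x_1,\ldots,x_n]$. Geometrically, $\psi$ descends to the identity on $\bbP^{n-1}_{\bbk(t)}$ (homogeneous coordinates are defined up to scaling by a unit), which is what ultimately makes the two sides of the claimed isomorphism agree.

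First I would establish the underlying isomorphism of graded modules (ignoring differentials). At the level of graded modules, $\Sigma F \otimes_A \bbk(t)$ is $F \otimes_{S,\varphi} \bbk(t)[x_1,\ldots,x_n]$, and applying $\psi^{-1}$ to the right factor gives an isomorphism to $F \otimes_S \bbk(t)[x_1,\ldots,x_n] = F \otimes_\bbk \bbk(t)$, whose sheafification on $\bbP^{n-1}_{\bbk(t)}$ is exactly $\Tilde{F} \otimes_\bbk \bbk(t)$. Next I would check that this isomorphism intertwines the differentials: the entries of $\Sigma\del$ are the $\varphi$-images $t^{\deg(f)}f$ of the entries $f$ of $\del$, and applying $\psi^{-1}$ to such an entry yields $\psi^{-1}(t^{\deg(f)}f) = t^{\deg(f)}\cdot t^{-\deg(f)}f = f$, exactly recovering the corresponding entry of $\del$. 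Hence $\Sigma F \otimes_A \bbk(t) \simeq \Tilde{F} \otimes_\bbk \bbk(t)$ as sheaves of differential $\calO_{\bbP^{n-1}_{\bbk(t)}}$-modules.

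Finally I would transport the isomorphism across the tensor with $\calE$. The sheaf $(\calE\boxtimes\calO_{\bbA^1})\otimes_A \bbk(t)$ is canonically the pullback $\calE\otimes_\bbk \bbk(t)$ to $\bbP^{n-1}_{\bbk(t)}$, and since tensor products commute with base change we obtain
\[
\bigl(\Sigma F \otimes_{\bbP^{n-1}_A}(\calE\boxtimes\calO_{\bbA^1})\bigr)\otimes_A \bbk(t) \simeq \bigl(\Sigma F \otimes_A \bbk(t)\bigr) \otimes_{\bbP^{n-1}_{\bbk(t)}}(\calE\otimes_\bbk \bbk(t)) \simeq (\Tilde{F} \otimes_{\bbP^{n-1}}\calE)\otimes_\bbk \bbk(t),
\]
which is the desired conclusion. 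The main bookkeeping obstacle is tracking how the $S$-action transforms under the change of scalars along $\psi^{-1}$ and verifying that the matrix entries of $\Sigma\del$ transform as claimed; once one observes that $\psi$ scales a homogeneous polynomial of degree $d$ by exactly $t^d$, the rest is essentially formal, and the tensor-with-$\calE$ step is a routine base change identity.
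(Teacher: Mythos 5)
Your proof is correct and follows essentially the same route as the paper: both hinge on the observation that the substitution $x_i\mapsto tx_i$ becomes an invertible (graded) automorphism after inverting $t$, and that applying its inverse undoes the twist $\Sigma$, sending each differential entry $t^{\deg(f)}f$ back to $f$. The paper phrases this via a commutative square of ring maps with $\tilde{\varphi}$ invertible, while you construct the inverse $\psi^{-1}$ and check the matrix entries explicitly, but the content is the same.
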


\begin{proof}
We have a commutative diagram of rings

\begin{minipage}{7cm}
\begin{center}
\begin{tikzcd}
S \arrow[r, "\varphi"] \arrow[d, "1\otimes_{\bbk}\bbk(t)"'] & S\otimes_{\bbk}A \arrow[d, "1\otimes_A \bbk(t)"] \\
S\otimes_{\bbk}\bbk(t)  \arrow[r, "\tilde{\varphi}"']      & S\otimes_{\bbk}\bbk(t)                         
\end{tikzcd}
\end{center}
\end{minipage}
\begin{minipage}{7cm}
\begin{center}
\begin{align*}
    \varphi\colon x_i\mapsto x_i\otimes t\\
    \tilde{\varphi}\colon x_i\mapsto x_i\otimes t
\end{align*}
\end{center}
\end{minipage}

where the map $\tilde{\varphi}$ is invertible. This induces a diagram

\begin{center}
\begin{tikzcd}
\DM(\bbP^{n-1}_\bbk) \arrow[r, "\varphi"] \arrow[d, "-\otimes_{\bbk}\bbk(t)"'] & \DM(\bbP^{n-1}_A) \arrow[d, "-\otimes_A \bbk(t)"] \\
\DM(\bbP^{n-1}_{\bbk(t)})  \arrow[r, "\tilde{\varphi}"']      & \DM(\bbP^{n-1}_{\bbk(t)})                         
\end{tikzcd}
\end{center}

Unraveling definitions, we have  $\varphi\left(\tilde{F}\otimes_{\bbP^{n-1}}\calE \right) = \Sigma F\otimes_{\bbP^{n-1}_A}\left(\calE\boxtimes\calO_{\bbA^1}\right)$ as differential modules---that is, $\varphi$ is a map of modules that is compatible with the differential (if we consider the differentials to be given by matrices, the map $\varphi$ just replaces each entry $f$ with $t^{\deg f}f$). By commutativity of the diagram, the claim follows since $\tilde{\varphi}$ is invertible.
\end{proof}

\begin{theorem}\label{thm:exactfl}
If $\Tilde{F}\otimes \calE$ is exact then $\Phi(F,\calE)$ has finite length homology.
\end{theorem}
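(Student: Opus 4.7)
The strategy is to show that $\Phi(F, \calE)$ has finite length homology by verifying that its homology is torsion as a graded $A$-module. Because $A = \bbk[t]$ is a PID and we are working with finitely generated graded modules, the structure theorem gives a decomposition into free summands $A(d_i)$ and finite-length torsion summands $A/(t^{k_j})(e_j)$. Hence torsion is equivalent to finite length in this setting, and it suffices to show that $\Phi(F,\calE) \otimes_A \bbk(t)$ has vanishing homology as a differential $\bbk(t)$-module.

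To analyze this localization, I would invoke flat base change for $R\pi_*$. Writing $\calF = \Sigma F \otimes^{DM}_{\bbP^{n-1}_A}(\calE \boxtimes \calO_{\bbA^1})$ and $\pi'\colon \bbP^{n-1}_{\bbk(t)} \to \mathrm{Spec}(\bbk(t))$ for the base-changed projection, flatness of $\bbk(t)$ over $A$ yields
\[
\Phi(F,\calE) \otimes_A \bbk(t) \;=\; R\pi_*(\calF) \otimes_A \bbk(t) \;\simeq\; R\pi'_*\bigl(\calF \otimes_A \bbk(t)\bigr).
\]
This identification is visible directly from the \v{C}ech bicomplex in Definition \ref{def:pshfwd}: each sheaf appearing there is a box product of a sheaf on $\bbP^{n-1}$ with a free $A$-module, so tensoring the entire bicomplex with $\bbk(t)$ over $A$ is exact and commutes with both $\pi_*$ and totalization. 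Next, by Lemma \ref{lem:sheafiso}, $\calF \otimes_A \bbk(t) \simeq (\tilde{F} \otimes_{\bbP^{n-1}} \calE) \otimes_\bbk \bbk(t)$ as sheaves of differential $\calO_{\bbP^{n-1}_{\bbk(t)}}$-modules. The hypothesis that $\tilde{F} \otimes \calE$ is exact, combined with flatness of the field extension $\bbk \hookrightarrow \bbk(t)$, forces the right-hand side to be exact as well. An exact differential sheaf represents the zero object in the derived category, so $R\pi'_*$ sends it to zero, and the theorem follows.

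The main technical point is justifying flat base change for the $R\pi_*$ of Definition \ref{def:pshfwd} along with the assertion that a differential sheaf with vanishing homology has pushforward with vanishing homology. Both reduce to inspection of the explicit \v{C}ech bicomplex: its termwise sheaves behave well under flat base change because they are box products with free $A$-modules, and an exact one-periodic complex of $\pi_*$-acyclic sheaves pushes forward to an exact one-periodic complex of $A$-modules, which is exactly what is needed.
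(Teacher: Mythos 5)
Your proposal is correct and follows essentially the same route as the paper: localize at $t$, use flat base change to commute $-\otimes_A\bbk(t)$ past $R\pi_*$, apply Lemma \ref{lem:sheafiso} to identify the base-changed sheaf with $(\tilde{F}\otimes\calE)\otimes_\bbk\bbk(t)$, and conclude from exactness that the homology of $\Phi(F,\calE)$ is $t$-torsion, hence finite length. Your added justification of base change via the explicit \v{C}ech bicomplex is a reasonable elaboration of the paper's citation of the general fact that flat pullback commutes with proper pushforward.
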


\begin{proof}
First we note that the conditions of exactness and finite length are not altered by the grading on $A$, so in what follows we will always forget the $A$-grading.
Because flat pullback commutes with proper pushforward \cite{stacks-project}, we have a commutative diagram

\begin{center}
\begin{tikzcd}
\bDDM(\bbP^{n-1}_A) \arrow[d, "-\otimes_A \bbk(t)"'] \arrow[r, "R\pi_*"] & \bDDM(A) \arrow[d, "-\otimes_\bbk \bbk(t)"] \\
\bDDM(\bbP^{n-1}_{\bbk(t)}) \arrow[r, "R\pi_*"]                         & \bDDM(\bbk(t))                            
\end{tikzcd}
\end{center}

The differential module $\Phi(F,\calE)$ is the image under $R\pi_*$ of the sheaf $\calF = \Sigma F\otimes_{\bbP^{n-1}_A}(\calE\boxtimes\calO_{\bbA^1})$. On the other hand, taking the tensor product $\calF\otimes_A\bbk(t)$ gives $\left(\Sigma F\otimes_{\bbP^{n-1}_A}(\calE\boxtimes\calO_{\bbA^1})\right)\otimes_A \bbk(t)$. By Lemma \ref{lem:sheafiso}, this is isomorphic to $(\tilde{F}\otimes\calE)\otimes_\bbk \bbk(t)$. But since $\tilde{F}\otimes\calE$ is exact, $(\tilde{F}\otimes\calE)\otimes_\bbk \bbk(t)$ is 0 in the derived category. Thus $\Phi(F,\calE)\otimes_\bbk \bbk(t)$ must be 0 in $\bDDM(\bbk(t))$. Since inverting $t$ yields a differential module with no homology, it therefore follows that $\Phi(F, \calE)$ has finite length homology.
\end{proof}

The implication of Theorems \ref{thm:phiformula} and \ref{thm:exactfl} is that the categorical pairing $\Phi$ descends to a pairing on cones

\[
\bsdm{S}{0}\times \cvb{\bbP^{n-1}}\to \bsdm{A}{0}
\]
where $\cvb{\bbP^{n-1}}$ is the cone of cohomology vectors of vector bundles on $\bbP^{n-1}$. In Section \ref{sec:1var}, we will describe the exterior facets of $\bsdm{A}{0}$ as linear functionals on $\bbB$, and in Section \ref{sec:predict} we will use the base case of $\bsdm{A}{0}$ alongside some computational data to outline a strategy for how this pairing on cones could be used to prove Conjecture \ref{conj:dmbs} in full for degree 0 differential modules.

We finish this section with a concrete example to demonstrate how $\Phi$ works.

\begin{example}
    Let $S = \bbk[x,y]$ and let $(F,\del)$ be the fold of the Koszul complex on the variables, so 
    \begin{align*}
        F = S\oplus S(-1)^2 \oplus S(-2), \quad \del = \begin{bmatrix}
            0 & x & y & 0\\
            0 & 0 & 0 & -y\\
            0 & 0 & 0 & x\\
            0 & 0 & 0 & 0
        \end{bmatrix}
    \end{align*}
    and let $\calE = \calO_{\bbP^1}$. Then $\Sigma F$ has underlying sheaf $\calO_{\bbP^1_A}\oplus \calO_{\bbP^1_A}(-1,-1)^2\oplus\calO_{\bbP^1_A}(-2,-2)$ with differential
    \[
\Sigma\del = \begin{bmatrix}
    0 & tx & ty & 0\\
    0 & 0 & 0 & -ty\\
    0 & 0 & 0 & tx\\
    0 & 0 & 0 & 0
\end{bmatrix}.
    \]
    Tensoring with $\calE\boxtimes\calO_{\bbA^1}$ does not change the underlying sheaf or differential since $\calE = \calO_{\bbP^1}$.

    Now computing $R\pi_*$, we get a differential $A$-module whose underying module is 
    \[
    \bigoplus_{j\in\bbZ} A(-j)^{\betadm(F)_j\gamma(\calE)_{-j}} = A\oplus A(-2)
    \]
    since $\calO_{\bbP^1}(-1)$ has no cohomology. To compute the differential on $\Phi(F,\calE)$, we use \cite[Lemma 3.5]{Eisenbud_2003}, which tells us that our differential should look like 
    \[
\begin{bmatrix}
    d_{0,0} & d_{1,0}\\
    0 & d_{1,1}
\end{bmatrix}
    \]
    where $d_{0,0}: A\to A$ and $d_{1,1}: A(-2)\to A(-2)$ are induced by the pieces of $\Sigma\del$ that map $\calO_{\bbP^1_A}\to\calO_{\bbP^1_A}$ and $\calO_{\bbP^1_A}(-2)\to\calO_{\bbP^1_A}(-2)$ and are therefore 0 since $\Sigma F$ was flag. By \ref{thm:exactfl} the homology is finite length, so the map $d_{1,0}: A(-2)\to A$ must be of the form $at^2$ for some unit $a\in A$, so our differential is
    \[
    \begin{bmatrix}
        0 & at^2\\
        0 & 0
    \end{bmatrix}
    \]
\end{example}

\section{Differential Modules Over $\bbk[t]$}\label{sec:1var}

In this section we investigate in detail the case of degree 0 differential modules over the polynomial ring in one variable. In particular, we prove Conjecture \ref{conj:dmbs} for differential modules over $\bbk[t]$ with finite length homology. When combined with the results of Section \ref{sec:duality}, this gives a base case for the theory in $n>1$ variables. 

\begin{theorem}\label{thm:1var}
Every degree 0 differential modules over $A = \bbk[t]$ with finite length homology has a Betti vector that can be expressed as a positive rational combination of Betti vectors of folds of pure resolutions whose degree sequences form a chain.
\end{theorem}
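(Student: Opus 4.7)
The plan is to decompose $\bfb$ explicitly by realizing it as the flattening of a two-term ``rational Betti table'' and pairing its rows in sorted order, the crucial intermediate input being a matching inequality on the entries of $\bfb$.

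Let $F$ be a minimal degree $0$ differential $A$-module with finite length homology and Betti vector $\bfb = (b_j)$, and write $F = \bigoplus_j A(-j)^{b_j}$. Minimality and the degree $0$ condition force the differential to be strictly block lower triangular with respect to the degree decomposition, with each block $A(-j)^{b_j} \to A(-i)^{b_i}$ (for $i < j$) of the form $t^{j-i}$ times a $\bbk$-matrix. Assembling these coefficient matrices gives a single $N \times N$ matrix $M$ over $\bbk$ (where $N = \sum_j b_j$) which is strictly block lower triangular, satisfies $M^2 = 0$, and has $\rank M = N/2$---the last equality coming from the finite length homology hypothesis after passing to $\bbk(t)$. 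From here I would prove the matching inequality $b_k \leq N/2$ for every $k$: since $M$ strictly lowers degree, the restriction $M|_{V_{\leq k}}$ maps into $V_{\leq k-1}$ and so has rank at most $s_{k-1} := \sum_{j \leq k-1} b_j$, whence rank-nullity gives $\dim \ker(M|_{V_{\leq k}}) \geq s_k - s_{k-1} = b_k$; but this kernel sits inside $\ker M$, which has dimension $N/2$ by acyclicity.

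Now let $\{d_0 < \cdots < d_\ell\}$ be the support of $\bfb$. Choose the unique $m$ with $s_{d_{m-1}} \leq N/2 \leq s_{d_m}$, set $c = N/2 - s_{d_{m-1}} \in [0, b_{d_m}]$, and form the multiset $\calA$ of $N/2$ degrees consisting of $b_{d_j}$ copies of $d_j$ for $j < m$ together with $c$ copies of $d_m$, and the complementary multiset $\calB$ consisting of $b_{d_m} - c$ copies of $d_m$ together with $b_{d_j}$ copies of $d_j$ for $j > m$, so that $\calA \sqcup \calB$ records exactly the Betti vector $\bfb$. Sorting $\calA$ as $a_1 \leq \cdots \leq a_{N/2}$ and $\calB$ as $b_1 \leq \cdots \leq b_{N/2}$, a short position-counting argument using exactly the matching inequality $b_{d_m} \leq N/2$ shows that $a_i < b_i$ strictly for every $i$. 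Each pair $(a_i, b_i)$ is then a valid pure degree sequence, and the fold of the corresponding length-one pure resolution $A(-b_i) \xrightarrow{t^{b_i - a_i}} A(-a_i)$ has Betti vector $\bfe_{a_i} + \bfe_{b_i}$; summing these over $i$ recovers $\bfb$ by construction of $\calA$ and $\calB$, and since both $(a_i)$ and $(b_i)$ are nondecreasing, the pairs $\{(a_i, b_i)\}_i$ form a chain in the coordinatewise order.

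The main obstacle is the verification of $a_i < b_i$ in the sorted pairing. Strictness is tight precisely at positions where $a_i = d_m$, and checking those positions requires tracking carefully how many copies of $d_m$ occupy the top of $\calA$ versus the bottom of $\calB$; the matching inequality $b_{d_m} \leq N/2$ is exactly what forces the top $c$ copies of $d_m$ in $\calA$ (at positions $s_{d_{m-1}} + 1, \ldots, N/2$) and the bottom $b_{d_m} - c$ copies of $d_m$ in $\calB$ (at positions $1, \ldots, b_{d_m} - c$) to occupy disjoint ranges of indices, yielding the desired strict inequality.
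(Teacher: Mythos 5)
Your proof is correct, and it takes a genuinely different route from the paper's. The paper proceeds by sandwiching three cones: it introduces the cone $T$ cut out by the functionals $\tau_j(\bfb) = -b_j + \sum_{i\neq j}b_i$ and $\sigma_j(\bfb)=b_j$, proves $B\subseteq T$ by reducing, via the structure theorem over a PID and anchored flag resolutions, to homology of the form $A(-p)/(t^q)$, and proves $T\subseteq C$ together with the chain property by a greedy ``simplicial decomposition'' algorithm whose correctness and termination occupy several lemmas. Your matching inequality $b_k\leq N/2$ is precisely the facet inequality $\tau_k(\bfb)\geq 0$, but you obtain it by a self-contained rank computation: setting $t=1$ in the minimal differential gives a strictly block-triangular square-zero matrix $M$ conjugate to $\del$ over $\bbk(t)$, so exactness after inverting $t$ forces $\rank M = N/2$, while the degree filtration forces $\dim\ker(M|_{V_{\leq k}})\geq b_k$. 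Your sorted-halves pairing then replaces the greedy algorithm by a closed-form decomposition (in fact with integer coefficients), and your position-counting check of $a_i<b_i$ is right: equality at some position would require a single degree to occur with multiplicity at least $N/2+1$, contradicting $b_k\leq N/2$. What the paper's route buys is the explicit identification $B=T=C$, i.e.\ a facet description of $\bsdm{A}{0}$, which is exactly what Section 6 feeds into the categorical pairing; your route buys a shorter, more direct proof of the decomposition statement itself that sidesteps the reduction to cyclic homology summands. Note, though, that your combinatorial half uses nothing about $\bfb$ beyond nonnegativity and $b_k\leq N/2$ (after scaling to clear denominators and make $N$ even), so it also proves $T\subseteq C$, and the facet description is recoverable from your argument with one additional sentence.
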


To prove Theorem \ref{thm:1var}, we will define three cones in $\bbB$. The first is the cone $B$ of Betti vectors of differential modules with finite length homology. The second is the cone $C$ spanned by Betti vectors of folds of pure resolutions. We will show these two cones are equal via a third cone $T$ defined by the nonnegativity of the following linear functionals:
\begin{equation}\label{eq:facets}
   \left\{ \tau_j \colon \beta \mapsto  -\beta_j + \sum_{i\neq j}\beta_i\quad\text{and}\quad
    \sigma_j \colon \beta\mapsto \beta_j \quad\text{for}\quad j\in\bbZ \right\} 
\end{equation}

Let $\bbB_{(p,q)}$ be the subspace of vectors $\bfb$ such that $b_n = 0$ for $n>p$ or $n<p$, i.e. all nonzero entries occur between the indices $p$ and $q$. We set $B_{(p,q)} = \bbB_{(p,q)}\cap B, C_{(p,q)} = \bbB_{(p,q)}\cap C,$ and $ T_{(p,q)} = \bbB_{(p,q)}\cap T$. We first show that for any $p,q$, $T_{(p,q)}\subseteq C_{(p,q)}$ by giving an explicit algorithm for decomposing vectors in $T$ as positive rational sums of Betti vectors of folds of pure resolutions.

\begin{algorithm}[H]\caption{Simplicial Decomposition Algorithm}\label{alg:decomp}
\begin{algorithmic}
\Require{vector $\bfb = (b_p, b_{p+1}, \ldots, b_{q})\in T$}
\Ensure{decomposition of $\bfb$ as rational combination of pure Betti vectors $c \bfe_{a,b}$}

\State{$L :=$  empty list}
\State{PHASE I:}
\While{$\tau_i( \bfb) > 0$ for all $p\leq i\leq q$}
    \State{Let $k,\ell$ be the first two indices such that $b_k, b_{\ell}\neq 0$}
    \State{Set $j$ to be the an index where $\tau_j(\bfb)$ is minimal}
    \State{$c := \min\{b_k, b_{\ell}, \frac{1}{2}\tau_j\cdot b\}$}
    \State{$\bfb :=\bfb-c\bfe_{\{k,\ell\}}$}
    \State{add $(c, \{ k,l \})$ to $L$}
\EndWhile
\State{PHASE II:}
\If{$\tau_j( \bfb) = 0$}
    \For{$i = p, \ldots , q$}
        \If{$b_i\neq 0$}
            \State{add $(b_i, \{i,j\})$ to $L$}
        \EndIf
        \State{$\bfb := \bfb - b_i\bfe_{\{i,j\}}$}
    \EndFor
\EndIf

\Return{$L$}
\end{algorithmic}
\end{algorithm}

We demonstrate the above algorithm with an example:

\begin{example}
    Take $\bfb = (3,4,2,5)$ and assume our nonzero window is between indicies $0$ and $3$. We can quickly see that $\tau_i(\bfb)>0$ for $i=0, \ldots, 3$, so we proceed with `Phase I', which is essentially a greedy algorithm. We take $(3,4,2,5)-(1,1,0,0) = (2,3,2,5)$, which is our new $\bfb$. Again we check that $\tau_i(\bfb)>0$ for each $i$. Since the first two entries are still nonzero, we take our new $\bfb$ to be $(2,3,2,5)-(1,1,0,0) = (1,2,2,5)$. Now $\tau_3(\bfb) = 0$, so we move on to `Phase II', which tells us to decompose $(1,2,2,5)$ as $(1,0,0,1) + 2(0,1,0,1) + 2(0,0,1,1)$. This gives us a final decomposition of $\bfb$ as
    \[
    \bfb = 2(1,1,0,0)+(1,0,0,1)+2(0,1,0,1)+2(0,0,1,1).
    \]
\end{example}

The correctness of the decomposition algorithm is proved in the following lemmas.
\begin{lemma}
If $\tau_j\bfb = 0$ for some $j$ and $b_n = 0$ for $k<n<l$ and $n<k$, then Phase II of the algorithm gives a decomposition of $\bfb$ as a rational combination of pure Betti vectors. 
\end{lemma}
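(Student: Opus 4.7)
The plan is to verify directly that Phase II produces a valid decomposition of $\bfb$ as a positive rational combination of pure Betti vectors, using only the hypothesis $\tau_j(\bfb)=0$. I interpret the pair $(c,\{i,j\})$ added to $L$ as the summand $c\,\bfe_{\{i,j\}}$, where $\bfe_{\{i,j\}}$ denotes the vector with ones in positions $i$ and $j$ and zeros elsewhere, so that Phase II proposes the decomposition
\[
\bfb \;=\; \sum_{\substack{i \neq j \\ b_i \neq 0}} b_i\,\bfe_{\{i,j\}}.
\]

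The first point to check is that each $\bfe_{\{i,j\}}$ (with $i\neq j$) really is a pure Betti vector in the sense of the paper. Over $A=\bbk[t]$, for $i<j$ the two-term complex $A(-i) \xleftarrow{\,t^{\,j-i}\,} A(-j)$ is a minimal pure resolution of the finite length module $(A/(t^{j-i}))(-i)$; its degree-$0$ fold has underlying module $A(-i)\oplus A(-j)$ with Betti vector exactly $\bfe_{\{i,j\}}$, and the associated degree sequence is $(i,j)$.

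The decomposition equation itself is a short coordinatewise check. For $m\neq j$ only the summand with $i=m$ contributes, giving $b_m$ regardless of whether $b_m$ is zero or nonzero. For $m=j$ every summand contributes, so the $j$-th coordinate of the right hand side is $\sum_{i\neq j,\,b_i\neq 0} b_i = \sum_{i\neq j} b_i$, which equals $b_j$ precisely because $\tau_j(\bfb)=0$; this is the only place the hypothesis enters. Nonnegativity of the coefficients $b_i$ is immediate from $\bfb\in T$ and $\sigma_i(\bfb)=b_i\geq 0$, so the combination is positive rational as required.

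The remaining point, which the invocation of Phase II inside Theorem \ref{thm:1var} will eventually use, is that the degree sequences $(\min(i,j),\max(i,j))$ produced by Phase II form a chain in the partial order on length-$2$ degree sequences. Since every sequence contains the entry $j$, a short case analysis according to whether each $i$ lies below or above $j$ shows that both coordinates of one sequence are always $\leq$ the corresponding coordinates of the other. The proof thus reduces to unpacking the definitions and performing a brief bookkeeping verification; no step presents a genuine obstacle, and the auxiliary support hypothesis on $\bfb$ (giving the first two nonzero entries at positions $k$ and $\ell$) plays no role in the decomposition argument itself — it is simply the state in which Phase I leaves $\bfb$ before Phase II is invoked.
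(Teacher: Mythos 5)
Your proof is correct and is essentially the paper's own argument: the paper's proof is the one-line observation that $\tau_j\bfb=0$ means $b_j=\sum_{i\neq j}b_i$, hence $\bfb=\sum_{i\neq j}b_i\bfe_{\{i,j\}}$, which is exactly your coordinatewise check. The extra verifications you supply (that each $\bfe_{\{i,j\}}$ is realized by a pure resolution over $\bbk[t]$, nonnegativity of the coefficients, and the chain property) are correct but are handled elsewhere in the paper rather than in this lemma.
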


\begin{proof}
By definition, $\tau_j\bfb = 0$ if and only if $b_j = \sum_{i\neq j}b_i$, so $\bfb = \sum_{i\neq j}b_i\bfe_{\{i,j\}}$.
\end{proof}

The next two lemmas combine to say that after each iteration of the while loop in Phase I of the algorithm, the vector $\bfb$ remains in the cone $T$.

\begin{lemma}
Let $\bfb\in T$ and $c = \min\{b_k, b_{\ell}, \frac{1}{2}\tau_j\cdot b\}$ where $j$ is an index such that $\tau_j\bfb$ is minimal. Let $\bfb' = \bfb - c\bfe_{\{k,l\}}$. Then $\tau_i\bfb' \geq 0$ for all $i$.
\end{lemma}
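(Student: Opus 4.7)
The plan is to prove this by a direct linear algebra computation on the functional $\tau_i$, exploiting its linearity together with the explicit form of $\bfe_{\{k,\ell\}}$. The key observation to make first is how $\tau_i$ evaluates on the pure direction $\bfe_{\{k,\ell\}}$: by the definition $\tau_i(\bfx) = -x_i + \sum_{n\neq i}x_n$, we have
\begin{equation*}
\tau_i(\bfe_{\{k,\ell\}}) = \begin{cases} 0 & \text{if } i\in\{k,\ell\},\\ 2 & \text{if } i\notin\{k,\ell\}.\end{cases}
\end{equation*}
This is the heart of the argument; once this is in hand, linearity gives $\tau_i(\bfb') = \tau_i(\bfb) - c\,\tau_i(\bfe_{\{k,\ell\}})$, and the rest of the proof is a two-case split.

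In the first case, when $i\in\{k,\ell\}$, the contribution from $\bfe_{\{k,\ell\}}$ vanishes, so $\tau_i(\bfb') = \tau_i(\bfb) \geq 0$ follows immediately from the hypothesis $\bfb\in T$. In the second case, when $i\notin\{k,\ell\}$, I would use that $\tau_i(\bfb')=\tau_i(\bfb) - 2c$ and bound this below. Since $j$ is chosen to minimize $\tau_j(\bfb)$ over all indices, in particular $\tau_i(\bfb) \geq \tau_j(\bfb)$, and since the choice of $c$ guarantees $c \leq \tfrac{1}{2}\tau_j(\bfb)$, we obtain
\begin{equation*}
\tau_i(\bfb') = \tau_i(\bfb) - 2c \geq \tau_j(\bfb) - 2c \geq 0.
\end{equation*}

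There isn't really a main obstacle here: the statement is essentially a book-keeping lemma that justifies the greedy step of the decomposition algorithm, and its proof reduces to computing $\tau_i(\bfe_{\{k,\ell\}})$ and invoking the minimality of $\tau_j(\bfb)$. The only subtle point worth stating explicitly is why it suffices to compare $c$ against $\tfrac{1}{2}\tau_j(\bfb)$ rather than $\tfrac{1}{2}\tau_i(\bfb)$ for each individual $i$, and this is exactly the role played by choosing $j$ to be a minimizer. I would write the proof in at most half a page, structured as: (1) evaluate $\tau_i$ on $\bfe_{\{k,\ell\}}$; (2) apply linearity; (3) dispatch the two cases above.
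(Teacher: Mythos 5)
Your proof is correct and follows essentially the same route as the paper's: compute $\tau_i(\bfe_{\{k,\ell\}})$ (equal to $0$ for $i\in\{k,\ell\}$ and $2$ otherwise), apply linearity, and in the case $i\notin\{k,\ell\}$ use the minimality of $\tau_j(\bfb)$ together with $c\leq\tfrac{1}{2}\tau_j(\bfb)$. The paper phrases the second case as a maximum over the three candidate values of $c$, but your direct chain of inequalities is the same argument written slightly more cleanly.
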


\begin{proof}
By definition, 
\begin{align*}
\tau_i\bfb' &= \tau_i\bfb - c\tau_i\bfe_{\{k,l\}}\\
&= \max\begin{cases}
        \tau_i\bfb - b_k\tau_i\bfe_{\{k,l\}}\\
        \tau_i\bfb - b_l\tau_i\bfe_{\{k,l\}}\\
        \tau_i\bfb - (\frac{1}{2}\tau_j\bfb)\tau_i\bfe_{\{k,l\}}
    \end{cases}
\end{align*}
Note that $\tau_i\bfe_{\{k,l\}} = 0$ if $i = k, l$ and $\tau_i\bfe_{\{k,l\}} = 2$ otherwise, so we have
\begin{align*}
\tau_i\bfb' = \begin{cases}
\tau_i\bfb &\text{ if } i = k,l \\
 \max\{\tau_i\bfb - 2b_k, \tau_i\bfb - 2b_l, \tau_i\bfb - \tau_j\bfb\} &\text{ if } i\neq k,l
\end{cases}
\end{align*}
By assumption, $\tau_i\bfb\geq 0$, so in the case where $i = k,l$ we're done. In the case where $i\neq k,l$, $\tau_i\bfb -\tau_j\bfb\geq 0$ since $j$ is an index where $\tau_j\bfb$ is minimal. Since $\tau_i\bfb -\tau_j\bfb\geq 0$, we must have that \[
\max\{\tau_i\bfb - 2b_k, \tau_i\bfb - 2b_l, \tau_i\bfb - \tau_j\bfb\}\geq 0.
\]
\end{proof}

\begin{lemma}
Let $\bfb\in T$ and $c = \min\{b_k, b_{\ell}, \frac{1}{2}\tau_j\cdot b\}$ where $j$ is an index such that $\tau_j\bfb$ is minimal. Let $\bfb' = \bfb - c\bfe_{\{k,l\}}$. Then $\sigma_i\bfb' \geq 0$ for all $i$.
\end{lemma}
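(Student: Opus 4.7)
The plan is to observe that this statement is essentially immediate from the construction of the constant $c$, once we unpack the definitions. Recall that $\sigma_i$ is simply the evaluation functional $\bfb \mapsto \bfb_i$, so $\sigma_i \bfb' \geq 0$ just means that the $i\th$ coordinate of $\bfb'$ is nonnegative. The vector $\bfe_{\{k,\ell\}}$ has $1$'s in positions $k$ and $\ell$ and $0$'s elsewhere, so subtracting $c\bfe_{\{k,\ell\}}$ from $\bfb$ only affects coordinates $k$ and $\ell$.

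First I would handle the easy case $i \notin \{k,\ell\}$. Here $\sigma_i \bfb' = \sigma_i \bfb = \bfb_i$, which is nonnegative by the standing hypothesis $\bfb \in T$ (the cone $T$ is cut out precisely by nonnegativity of the $\sigma_i$ and the $\tau_i$). So no entries outside positions $k$ and $\ell$ can go negative.

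For the remaining two cases $i=k$ and $i=\ell$, I would simply appeal directly to the definition
\[
c = \min\Bigl\{b_k,\; b_{\ell},\; \tfrac{1}{2}\tau_j\bfb\Bigr\}.
\]
By construction, $c \leq b_k$ and $c \leq b_\ell$, so
\[
\sigma_k \bfb' = b_k - c \geq 0 \quad\text{and}\quad \sigma_\ell \bfb' = b_\ell - c \geq 0.
\]
This covers all coordinates and finishes the argument. There is no real obstacle here; the entire content of the lemma is that the two summands $b_k$ and $b_\ell$ were deliberately included in the minimum defining $c$ precisely to guarantee this nonnegativity. This lemma, paired with the previous one, shows that each iteration of Phase I of Algorithm \ref{alg:decomp} keeps the running vector $\bfb$ inside $T$, which is the property needed for the algorithm to terminate with a valid decomposition.
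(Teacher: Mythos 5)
Your proof is correct and follows essentially the same route as the paper's: coordinates outside $\{k,\ell\}$ are unchanged and nonnegative because $\bfb\in T$, while for $i=k,\ell$ the inequality $c\leq b_k$, $c\leq b_\ell$ built into the definition of $c$ gives $b_i-c\geq 0$. The paper phrases the second case as a maximum over the three candidate values of $b_i-c$, but the underlying observation is identical.
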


\begin{proof}
\begin{align*}
\sigma_i\bfb' = \sigma_i\bfb - c\sigma_i\bfe_{\{k,l\}} = 
\begin{cases}
b_i-c &\text{ if } i=k,l\\
b_i &\text{ if } i\neq k,l
\end{cases}
\end{align*}
By assumption, $b_i\geq 0$, so in the case where $i\neq k,l$ $\sigma_i\bfb'\geq 0$. If $i = k,l$, 
\begin{align*}
    \sigma_i\bfb' = b_i - c = \max\{b_i-b_k, b_i-b_l, b_i - \frac{1}{2}\tau_j\bfb\}.
\end{align*}
If $i = k$, then $b_i - b_k = 0$ and if $i = l$ then $b_i - b_l = 0$, so in eather case $\sigma_i\bfb' = \max\{b_i-b_k, b_i-b_l, b_i - \frac{1}{2}\tau_j\bfb\}\geq 0$.
\end{proof}

We have seen already that Phase II of the algorithm terminates with a decomposition by pure vectors, so to show that the algorithm as a whole terminates we just need the following:

\begin{lemma}
Phase I of the algorithm terminates, that is, after a finite number of iterations, $\bfb$ will satisfy $\tau_j\bfb = 0$ for some $j$.
\end{lemma}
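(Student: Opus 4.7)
The plan is to track how each iteration of the while loop affects both the entries of $\bfb$ and the values $\tau_i(\bfb)$, and to show that each iteration either terminates Phase I, strictly reduces the number of nonzero entries of $\bfb$, or is part of a finite ``run'' culminating in one of those two outcomes. I split into two cases according to which term achieves the minimum in $c = \min\{b_k, b_\ell, \tfrac{1}{2}\tau_j \bfb\}$.

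If $c = b_k$ or $c = b_\ell$, then the update $\bfb \mapsto \bfb - c\bfe_{\{k,\ell\}}$ zeroes out one of these entries, strictly reducing the number of nonzero entries of $\bfb$. Otherwise $c = \tfrac{1}{2}\tau_j \bfb$ strictly, and a direct computation of the effect of the update on $\tau$ gives $\tau_i(\bfb - c\bfe_{\{k,\ell\}}) = \tau_i \bfb$ for $i \in \{k,\ell\}$ and $\tau_i(\bfb - c\bfe_{\{k,\ell\}}) = \tau_i\bfb - 2c$ for $i \notin \{k,\ell\}$. In particular, if the minimum value of $\tau$ on $\bfb$ is achieved at \emph{any} index $i^* \notin \{k,\ell\}$, then $\tau_{i^*}(\bfb') = 0$ after the step, so the while condition fails and Phase I terminates.

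The remaining scenario is a ``run'' of consecutive iterations in which $c = \tfrac{1}{2}\tau_j\bfb$ strictly and the argmin of $\tau$ is achieved only at indices in $\{k,\ell\}$. Throughout such a run, $\tau_k\bfb$ and $\tau_\ell\bfb$ (and hence $c$) are invariant by the computation above, and since $c < b_k$ and $c < b_\ell$ the first two nonzero indices remain $k$ and $\ell$. At each step, $b_k$ and $b_\ell$ decrease by the fixed positive quantity $c$ while all other entries stay fixed; simultaneously, each $\tau_i\bfb$ for $i \notin \{k,\ell\}$ decreases by $2c$. Because $c > 0$ is constant and all of $b_k, b_\ell, \tau_i\bfb$ (for $i \notin \{k,\ell\}$) start out positive, the run can last at most $\lceil \min(b_k/c,\ b_\ell/c,\ \min_{i \notin \{k,\ell\}} \tau_i\bfb /(2c)) \rceil$ iterations before either (a) one of $b_k, b_\ell$ becomes $0$, returning us to the first case and reducing the nonzero support, or (b) some $\tau_{i^*}\bfb$ becomes $0$, terminating Phase I.

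Combining these observations: every iteration either terminates Phase I or belongs to a run of bounded length, and each run is followed by either termination or a strict decrease in the number of nonzero entries of $\bfb$. Since the nonzero support is a nonnegative integer bounded by the support of the original input, only finitely many such decreases can occur, so Phase I must halt after finitely many iterations. The main thing to verify carefully is the invariance of $\tau_k\bfb$, $\tau_\ell\bfb$, and $c$ during a run, which follows from the two-line identities noted above; I do not anticipate any further obstacle.
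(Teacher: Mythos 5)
Your proof is correct and takes essentially the same approach as the paper's: the same case split on which term of $c = \min\{b_k, b_\ell, \tfrac{1}{2}\tau_j\bfb\}$ achieves the minimum, the same identities $\tau_i\bfb' = \tau_i\bfb$ for $i\in\{k,\ell\}$ and $\tau_i\bfb' = \tau_i\bfb - 2c$ otherwise, and the same observation that in the remaining case the quantities $\tau_i\bfb$ for $i\notin\{k,\ell\}$ drop by a fixed positive constant each iteration while $\tau_k\bfb,\tau_\ell\bfb$ stay put. The only (cosmetic) difference is that the paper rescales to integer entries and uses integrality to bound the length of such a run, whereas you bound it directly by dividing the initial values by the constant decrement, which works equally well over $\bbQ$.
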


\begin{proof}
First notice that if we multiply $\bfb$ by a constant, the only impact on the algorithm is that at each step the choice of $c$ will be multiplied by said constant, no other aspect of the decomposition will be altered. Multiplying by a constant, we may assume that $b_i\in\bbZ$ for all $i$ and, furthermore, that $\sum b_i$ is even. This implies that $\frac{1}{2}\tau_i\bfb\in\bbZ$ for all $i$,so all entries of $\bfb$ are integers throughout every step of the algorithm.

We proceed by induction on the number of nonzero entries of $\bfb$. If all entries are 0, the result is trivial. Exactly one nonzero entry is impossible by the assumption that $\tau_i\bfb\geq 0$ for all $i$. If $b_k$ and $b_\ell$ are the only two nonzero entries of $\bfb$, then $\tau_k\bfb = \tau_\ell\bfb = 0$. Suppose that $\bfb$ has more than 2 nonzero entries and take $c = \min\{b_k, b_l, \frac{1}{2}\tau_j\bfb\}$ where $j$ is an index such that $\tau_j\bfb$ is minimal as in the algorithm. If $c = b_k$ or $b_l$, then replacing $\bfb$ by $\bfb - c\bfe_{\{k,l\}}$ results in either $b_l=0$ or $b_k=0$, so the number of nonzero entries decreases and we may apply the induction hypothesis to get the result. 

Now consider the case where $c = \frac{1}{2}\tau_j\bfb$ and let $\bfb' = \bfb - c\bfe_{\{k,l\}}$. Then we have 
\[
\tau_i\bfb' = \tau_i\bfb - \left(\frac{1}{2}\tau_j\bfb\right)\tau_i\bfe_{\{k,l\}} = 
\begin{cases}
\tau_i\bfb &\text{ if } i = k,l\\
\tau_i\bfb - \tau_j\bfb &\text{ if } i\neq k,l
\end{cases}.
\]
If $j\neq k,l$ then $\tau_j\bfb' = \tau_j\bfb - \tau_j\bfb = 0$ and we're done.

If $j = k$ or $l$, then $\tau_i\bfb' = \tau_i\bfb - \tau_j\bfb$, in particular $\tau_i\bfb$ strictly decreases for all $i\neq k,l$ every iteration. Since $\tau_i\bfb\in\bbZ$ for all $i$, after enough iterations there will be some $i\neq k,l$ for which $\tau_i\bfb'<\tau_j\bfb$. Since in each iteration, $j$ is picked so that $\tau_j$ is minimal, this means that in the subsequent iteration of the algorithm we will be in the case where $j\neq k,l$, so the result follows.   
\end{proof}

The above lemmas combine to show that for any $p<q$, we have containment $T_{(p,q)}\subseteq C_{(p,q)}$, i.e. that the cone spanned by the pure Betti vectors is contained in the cone defined by the facet equations in (\ref{eq:facets}). The following proposition says that additionally any vector in $T$ can be decomposed via pure Betti vectors whose degree sequences \emph{form a chain}.

\begin{proposition}\label{prop:chain}
The degree sequences in $L$ form a chain.
\end{proposition}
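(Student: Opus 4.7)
The plan is to verify the chain condition by separately analyzing the degree sequences produced by Phase I and Phase II, showing that each subsequence is weakly increasing in the componentwise order, and then checking that the final Phase I pair is dominated by the first Phase II pair. Since the componentwise order is transitive, these local increments suffice to guarantee that the entire collection in $L$ is totally ordered.

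For Phase I, I would track how the pair $(k, \ell)$ of leftmost two nonzero indices of $\bfb$ evolves from one iteration to the next. The subtraction $\bfb \mapsto \bfb - c\bfe_{\{k,\ell\}}$ splits into three cases according to which term achieves the minimum $c = \min\{b_k, b_\ell, \tfrac12\tau_j\bfb\}$. If $c = b_k$, then $b_k$ is zeroed out, so the new leftmost index satisfies $k' \geq \ell > k$ and hence $\ell' > k' \geq \ell$. If $c = b_\ell$ (and $c < b_k$), then $b_\ell$ is zeroed while $b_k$ stays positive, giving a new pair of the form $(k, \ell'')$ with $\ell'' > \ell$. If $c = \tfrac12\tau_j\bfb$ is strictly less than $b_k$ and $b_\ell$, then neither is zeroed, so the pair is unchanged. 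Degenerate subcases involving ties (for instance $b_k = b_\ell$, where both are zeroed simultaneously) all reduce to one of the above outcomes and still yield $(k, \ell) \leq (k', \ell')$.

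For Phase II, every pair shares the common index $j$, and $i$ ranges over the nonzero entries of $\bfb$ in increasing order. Sorted, the pairs with $i < j$ have the form $(i, j)$ and those with $i > j$ have the form $(j, i)$. This yields the sequence
\[
(i_1, j) \leq (i_2, j) \leq \cdots \leq (i_s, j) \leq (j, i_{s+2}) \leq \cdots \leq (j, i_r),
\]
where $i_1 < \cdots < i_s < j < i_{s+2} < \cdots < i_r$ are the nonzero indices of $\bfb$; the transition $(i_s, j) \leq (j, i_{s+2})$ uses $i_s < j < i_{s+2}$.

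To bridge the two phases, observe that $\tau_j\bfb = 0$ together with nonnegativity of $\bfb$ forces $b_j > 0$, so $j$ is itself a nonzero index with $j \geq k$. If $j = k$, the first nontrivial Phase II pair is $\{\ell, k\} = (k, \ell)$, matching the last Phase I pair. If $j \geq \ell$, the first Phase II pair is $(k, j) \geq (k, \ell)$. The main obstacle is the careful bookkeeping for Phase I: tracking how the leftmost two nonzero indices respond to the various possibilities for $c$ and the placement of $j$. Once these cases are all verified, the chain property follows by transitivity.
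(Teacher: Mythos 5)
Your proof is correct and follows essentially the same three-step structure as the paper's own argument: Phase I pairs increase because the leftmost two nonzero indices can only move rightward, Phase II pairs all share the index $j$ and hence form a chain, and the bridge uses $b_j>0$ to place $j$ relative to the last Phase I pair $(k,\ell)$. The only quibble is the degenerate case $\bfb=0$ entering Phase II, where the claim $b_j>0$ fails—but then nothing is added to $L$, so the conclusion still holds.
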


\begin{proof}
First we consider the two phases of the algorithm separately. 
In Phase I, the degree sequence added to the list at each iteration is $(k,\ell)$ where $k,\ell$ are the first two indices of nonzero entries in $\bfb$. In order for the values of $k,\ell$ to change at the next iteration, one of $b_k$ or $b_\ell$ must become zero, meaning that the first two indices of nonzero entries in $\bfb$ give a degree sequence strictly greater than the previous $(k,\ell)$. In Phase II, $j$ is fixed and $i$ increases, so the degree sequences $(i,j)$ for $i<j$ and $(j,i)$ for $i>j$ form a chain.

What remains to prove is that the first degree sequence added to $L$ in Phase II is greater than the last degree sequence added to $L$ in Phase I. Let $(k,\ell)$ be the final degree sequence added in Phase I. This means that for all $i<k$ and all $k<i<\ell$, $b_i = 0$ since $k$ and $\ell$ are the first two nonzero indices. If $\tau_j\cdot\bfb = 0$ then either $j>\ell$ or $\bfb = 0$. In the latter case, no more degree sequences are added, so we're done. In the former case, the first degree sequence added in Phase II is either $(i,j)$ for $i = k$ or $\ell$, in which case we have $(i,j)>(k,\ell)$ or $i>\ell$ in which case we also have $(i,j)>(k,\ell)$.
\end{proof}

We can now prove Theorem \ref{thm:1var} by establishing the equality of the three cones $B, C$, and $T$.

\begin{proof}[Proof of Theorem \ref{thm:1var}]
The containment $C\subseteq B$ follows from the existence of differential modules with pure Betti vectors.

By the simplicial decomposition algorithm and subsequent lemmas, we have $T_{(a,b)}\subseteq C_{(a,b)}$. Since any vector in $\bbB$ has finitely many nonzero entries, this gives the containment $T\subseteq C$. 

To show the containment $B\subseteq T$, we will prove that the functionals $\tau_j$ and $\sigma_j$ are nonnegative on the Betti vector $\betadm(D)$ for any graded, degree 0 differential module $D$ with finite length homology over $\bbk[t]$.
Let $D\in \DM(A,0)$ have finite length homology $H$. Since $H$ has finite length over a PID, it is torsion. Thus $H$ is of the form $\bigoplus A(-p)/(t^q)$, so it suffices to check nonnegativity for $H = A(-p)/(t^q)$. In this case, $H$ has minimal free resolution $A(-p-q)\xrightarrow{t^q}A(-p)\to 0$. By \cite[Theorem 3.2]{brown2021minimal}, $D$ has a free flag resolution $F$ of the form $A(-p-q)\oplus A(-p)$ with differential $\begin{bmatrix}0 & t^q\\ 0 & 0 \end{bmatrix}$, which is minimal. Thus $\betadm(D)_i = 1$ for $i = p, p+q$ and 0 otherwise, so $\tau_j\betadm(D) = 0$ or $2$ and $\sigma_j\betadm(D) = 0$ or $1$. 

This establishes the containments $B\subseteq T\subseteq C$ and $C\subseteq B$. Thus the cone $B$ of Betti vectors of differential modules with finite length homology over $A$ is equal to the cone $C$ spanned by the pure Betti vectors. 

Furthermore, by Proposition \ref{prop:chain}, every Betti vector of a differential module with finte length homology has a decomposition by pure Betti vectors whose degree sequences form a chain.
\end{proof}

\begin{remark}
For Betti diagrams of finite length modules, every Betti table has a \emph{unique} decomposition by pure Betti tables whose degree sequences form a chain. For differential modules however, the Betti vector my have a nonunique decomposition even after insisting that the pure vectors come from a chain of degree sequences. As an easy example, consider the Betti vector $(1,1,1,1)$. It can be decomposed either as $(1,1,0,0)+(0,0,1,1), (1,0,1,0)+(0,1,0,1), \text{ or } (1,0,0,1)+(0,1,1,0)$. Of these three decompositions, the first two both correspond to degree sequences that form a chain. Our algorithm would yield the first decomposition, corresponding to the chain of degree sequences $(0,1)<(2,3)$. The second decomposition corresponds to the chain $(0,2)<(1,3)$.
\end{remark}

\section{Predictions for Differential Modules Over $S$}\label{sec:predict}

We conjecture that Theorem \ref{thm:1var} gives a potential base case for the general theory in $n$ variables. Because the $\tau$ and $\sigma$ functionals are non-negative on differential modules in $\bDDM(A,0)$ with finite length homology, Theorem \ref{thm:exactfl} implies that they are non-negative on $\betadm(\Phi(F,\calE))$ for any coherent sheaf $\calE$ and any differential module $F\in \bDDM(S,0)$ with finite length homology. In the original theory, similar machinery provided a method for proving nonnegativity of facet equations since the facet equations of the cone spanned by the pure Betti tables came from cohomology tables of vector bundles on $\bbP^n$. Due to the apparent similarity between the original theory and the generalization to differential modules in the degree 0 case, we expect that something similar might hapen here as well. This illuminates a strategy for proving Conjecture \ref{conj:dmbs} in general. The idea is that once we know the exterior facets of the cone spanned by Betti vectors of differential modules with pure homology, this will enable us to prove non-negativity of the facet equations on the Betti vectors of differential modules in $\DM(S,0)$, provided the facet equations come from cohomology tables of sheaves on $\bbP^{n-1}$ as in the original theory. Proving a facet description has remained elusive thus far, but we can use Macaulay2 to compute facet descriptions of the cone in finite dimensional windows. 

For instance, let's examine the case where $S = \bbk[x,y,z]$ and look at the cone spanned by Betti vectors of differential modules with homology that is pure with degree sequence between $(0,1,2,3)$ and $(4,5,6,7)$. A Fourier Motzkin elimination computation gives the columns of the following matrix as a subset of the linear functionals in the facet description of the cone:

\begin{equation}\label{eq:facetmatrix}
\begin{bmatrix}
 & \vdots & \vdots & \vdots & \vdots & \vdots & \vdots & \vdots & \vdots & \vdots & \vdots & \vdots & \vdots &  \\
     \cdots & 15  & 15  & -10 & 5 & 6 & 15 & 2 & 3 & 5 & 1  & 8 & 3 & \cdots \\
     \cdots & 10 & 10 & 6  & 2 & -3 & -8 & 0 & 1 &0 & 0   & 3 & 0 &\cdots \\
     \cdots &  -6   & 6  & 3  & 0  & 1  & 3  & 1 & 0  & 3 & 0  & 0  & 1 & \cdots \\
     \cdots & 3 & -3   & 1  & 1 & 0 & 0   & 1 & 0  & -4 & 1  & 1 & 0  & \cdots \\
     \cdots & 1  & 1 & 0  & 1 & 0  & 1 & 0  & 1 & 3 & -3 & 0  & 3  & \cdots \\
     \cdots &  0  & 0  & 0 &  0 & 1 & 0  & 2 & -3 & 0  & 6  & 3 & 8  & \cdots \\
     \cdots & 0  & 0   & 1  & 2 & 3 & 3  & 5 & 6 & 5 & 10 & 8 & -15 & \cdots \\
     \cdots & 1  & 1 & 3  & -5 & 6 & 8  & -9 & 10 & 12 & 15 & -15 & 24 & \cdots \\
 & \vdots & \vdots & \vdots & \vdots & \vdots & \vdots & \vdots & \vdots & \vdots & \vdots & \vdots & \vdots & 
\end{bmatrix}
\end{equation}

One might recognize the entries of each column as the ranks of the sheaf cohomology modules of \emph{supernatural} vector bundles on $\bbP^2$ (see \cite{eisenbud2009cohomology}) appearing with a single negative entry in each column. For instance, twists of the line bundle $\calO_{\bbP^2}$ have cohomology modules whose ranks are given by the entries in the first three columns of (\ref{eq:facetmatrix}), with some shift. If we look at the sign pattern, we notice that there is a single negative entry in each column, which is exactly what we saw in the facet equations for $\bsdm{A}{0}$, where the facets were given by taking inner product with vectors of the form $\tau_j = (\ldots, 1, 1, -1, 1, \ldots )$, with the negative entry occurring in the $j\th$ entry. What is more, we see this same phenomenon repeated for all examples computed. 

This leads us to a conjecture that, similarly to \cite{EE2017} the facets $\bsdm{S}{0}$ come from combining the cohomology tables of certain vector bundles on $\bbP^{n-1}$ with the facets for $\bsdm{A}{0}$. 

\begin{conjecture}
The exterior facets of the cone of Betti vectors of degree 0 differential modules with finite length homology over $S$ are given by the vanishing of linear functionals that arise as the composition
\[
\bDDM(S)\xrightarrow{\Phi(-,\calE)} \bDDM(A)\xrightarrow{f}\bbZ
\]
where $\calE$ is a supernatural vector bundle on $\bbP^{n-1}$ and $f$ is one of the linear functionals $\tau_j$ or $\sigma_i$ defined in (\ref{eq:facets}).
\end{conjecture}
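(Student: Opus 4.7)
The plan is to mirror the Eisenbud--Schreyer strategy for the classical Boij--S\"oderberg conjectures, using the categorical pairing $\Phi$ as a mechanism to transport facet information from the well-understood base case $\bsdm{A}{0}$ up to $\bsdm{S}{0}$. The argument naturally splits into two stages: first, verify that each proposed functional $f\circ\Phi(-,\calE)$ is nonnegative on $\bsdm{S}{0}$; and second, verify that these functionals are enough to cut out the exterior facets.

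For nonnegativity, let $F\in \bDDM(S,0)$ have finite length homology, let $\calE$ be a supernatural vector bundle on $\bbP^{n-1}$, and let $f\in\{\tau_j,\sigma_j\}$. Since $H(F)$ is finite length, the sheafification $\tilde F$ is a complex of differential $\calO_{\bbP^{n-1}}$-modules whose associated sheaf is trivial away from the irrelevant ideal; combined with the locally-free structure of a supernatural bundle, this should yield the exactness of $\tilde F\otimes\calE$ needed to invoke Theorem \ref{thm:exactfl}. The theorem then places $\Phi(F,\calE)$ in $\bDDM(A,0)$ with finite length homology, so by Theorem \ref{thm:1var} the functionals $\tau_j$ and $\sigma_j$ are nonnegative on $\betadm(\Phi(F,\calE))$. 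Applying Theorem \ref{thm:phiformula} expresses the resulting value as an explicit linear combination of the entries of $\betadm(F)$ weighted by the Hilbert data $\gamma(\calE)$, yielding the desired nonnegativity.

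For the harder direction, I would work in finite-dimensional windows $\bbB_{(p,q)}$, following the template of Section \ref{sec:1var}. The first task is, for each such window, to give a combinatorial description of the cone spanned by Betti vectors of folds of pure resolutions indexed by chains of degree sequences, and then to identify its facets with the restrictions of $f\circ\Phi(-,\calE)$ for supernatural $\calE$ and $f\in\{\tau_j,\sigma_j\}$. Matching would proceed by expanding $\betadm_j(\Phi(F,\calE))=\betadm_j(F)\gamma_{-j}(\calE)$ on a pure vector, using the existence theorem of \cite{eisenbud2009cohomology} to realize each admissible root sequence by some supernatural bundle, and lining up the sign patterns already visible in computational data such as \eqref{eq:facetmatrix}: the single negative entry in each column specifies which $\tau_j$ or $\sigma_j$ is paired, while the positive entries are the hypercohomology ranks of shifts of the corresponding $\calE$. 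The equality of cones $\bsdm{S}{0}=C$ would then follow from a simplicial decomposition algorithm modeled on Algorithm \ref{alg:decomp}, peeling off a pure Betti vector at each step while preserving all of the facet inequalities just shown to hold.

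The main obstacle is Stage 2. Even classically, the simplicial fan structure on pure diagrams that underlies \cite{eisenbud2008Betti, EE2017} is delicate; in the differential-module setting, the flattening operation can introduce further cancellations, as in Example \ref{ex:cancel}, which blur the boundary between ``allowed'' and ``spurious'' Betti vectors. The hardest step will be showing that every exterior facet of $C$ arises from a supernatural bundle, i.e.\ that no ``exotic'' facet is born from differential-module phenomena without a classical analogue. This is essentially the missing combinatorial description of $\bsdm{S}{0}$ alluded to in the introduction, and a full proof almost certainly requires a more conceptual description of the pure differential-module Betti vectors, one that would simultaneously illuminate the non-uniqueness of pure decompositions observed at the end of Section \ref{sec:1var}.
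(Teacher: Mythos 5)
The statement you are trying to prove is stated in the paper as an open conjecture; the paper offers only motivation and computational evidence (the Fourier--Motzkin data in \eqref{eq:facetmatrix}), and explicitly says that a facet description ``has remained elusive thus far.'' Your Stage~1 is correct and coincides with the strategy the paper itself outlines in Section~\ref{sec:predict}: finite length homology of $F$ forces $H(\tilde F)=\widetilde{H(F)}=0$, tensoring with a locally free $\calE$ preserves exactness, so Theorem~\ref{thm:exactfl} applies, Theorem~\ref{thm:1var} gives nonnegativity of $\tau_j$ and $\sigma_j$ on $\betadm(\Phi(F,\calE))$, and Theorem~\ref{thm:phiformula} converts this into nonnegativity of the composite functional on $\betadm(F)$. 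But this only shows that the proposed functionals are \emph{valid} inequalities, i.e.\ that the cone they cut out contains $\bsdm{S}{0}$; it says nothing about whether they are facets, nor whether they suffice.

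The genuine gap is your Stage~2, and your proposal does not close it: you invoke ``a simplicial decomposition algorithm modeled on Algorithm~\ref{alg:decomp}'' and a matching of facets with supernatural cohomology tables, but neither is constructed, and there are concrete obstructions to transporting the classical argument. The flattening map $\bbV\to\bbB$ is far from injective on pure diagrams: distinct chains of degree sequences can produce linearly dependent Betti vectors (the paper's example $(1,1,1,1)$ already admits two chain decompositions over $\bbk[t]$), so the simplicial fan structure of \cite{eisenbud2008Betti} does not push forward to a fan on $\bbB$, and the greedy peeling step of Algorithm~\ref{alg:decomp} has no evident analogue in $n$ variables. One would also need to show that pure Betti vectors remain \emph{extremal} after flattening and that no exterior facet of $\bsdm{S}{0}$ arises from differential-module cancellation phenomena with no classical counterpart; the paper's Proposition~\ref{prop:n+1degs} is only weak evidence for the former, and you correctly flag the latter as the hardest point without resolving it. As written, your argument establishes one containment of cones and a plausible program for the rest, not a proof.
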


To see this in action, consider the first column of (\ref{eq:facetmatrix}), and suppose we index so that the $-6$ is the entry in position 0. The vector 
\[
(\ldots, 15, 10, 6, 3, 1, 0, 0, 1, \ldots)
\]
is equal to $\gamma(\calO_{\bbP^{n-1}}(-5))$. Dot product with this vector defines a linear functional that sends $\betadm(F)$ to $\betadm(\Phi(F,\calO_{\bbP^{n-1}}(-5)))$. Composing this with $\tau_0$ as defined in (\ref{eq:facets}) gives the linear function that is dot product with the first column of (\ref{eq:facetmatrix}).

\bibliographystyle{amsalpha}
\bibliography{bibliography}

\providecommand{\bysame}{\leavevmode\hbox to3em{\hrulefill}\thinspace}
\providecommand{\MR}{\relax\ifhmode\unskip\space\fi MR }
\providecommand{\MRhref}[2]{%
  \href{http://www.ams.org/mathscinet-getitem?mr=#1}{#2}
}
\providecommand{\href}[2]{#2}
\begin{thebibliography}{BBEG12}

\bibitem[ABI07]{avramov2007class}
Luchezar~L. Avramov, Ragnar-Olaf Buchweitz, and Srikanth Iyengar, \emph{Class
  and rank of differential modules}, Invent. Math. \textbf{169} (2007), no.~1,
  1--35. \MR{2308849}

\bibitem[BBEG12]{Berkesch_2012}
Christine Berkesch, Jesse Burke, Daniel Erman, and Courtney Gibbons, \emph{The
  cone of betti diagrams over a hypersurface ring of low embedding dimension},
  Journal of Pure and Applied Algebra \textbf{216} (2012), no.~10, 2256--2268.

\bibitem[BD10]{boocher2010rank}
Adam Boocher and Justin~W DeVries, \emph{On the rank of multigraded
  differential modules}, arXiv preprint arXiv:1011.2167 (2010).

\bibitem[BE21a]{brown2021minimal}
Michael~K. Brown and Daniel Erman, \emph{Minimal free resolutions of
  differential modules}, 2021.

\bibitem[BE21b]{ToricTate}
Michael~K. Brown and Daniel Erman, \emph{Tate resolutions on toric varieties},
  2021.

\bibitem[BEKS11]{Berkesch_2011}
Christine Berkesch, Daniel Erman, Manoj Kummini, and Steven~V. Sam,
  \emph{Shapes of free resolutions over a local ring}, Mathematische Annalen
  \textbf{354} (2011), no.~3, 939--954.

\bibitem[BS08]{2008}
Mats Boij and Jonas Söderberg, \emph{Graded betti numbers of cohen-macaulay
  modules and the multiplicity conjecture}, Journal of the London Mathematical
  Society \textbf{78} (2008), no.~1, 85–106.

\bibitem[BS12]{boij2012betti}
Mats Boij and Jonas S{\"o}derberg, \emph{Betti numbers of graded modules and
  the multiplicity conjecture in the non-cohen--macaulay case}, Algebra \&
  Number Theory \textbf{6} (2012), no.~3, 437--454.

\bibitem[BS15]{Boij_2015}
Mats Boij and Gregory~G. Smith, \emph{Cones of hilbert functions},
  International Mathematics Research Notices \textbf{2015} (2015), no.~20,
  10314--10338.

\bibitem[BV22]{BV22}
Maya Banks and Keller VandeBogert, \emph{Differential modules with complete
  intersection homology}, 2022.

\bibitem[CE16]{cartan2016homological}
Henry Cartan and Samuel Eilenberg, \emph{Homological algebra (pms-19), volume
  19}, Princeton university press, 2016.

\bibitem[EE17]{EE2017}
David Eisenbud and Daniel Erman, \emph{Categorified duality in
  boij–söderberg theory and invariants of free complexes}, Journal of the
  European Mathematical Society \textbf{19} (2017), no.~9, 2657–2695.

\bibitem[EFS03]{Eisenbud_2003}
David Eisenbud, Gunnar Fl{\o}ystad, and Frank-Olaf Schreyer, \emph{Sheaf
  cohomology and free resolutions over exterior algebras}, Transactions of the
  American Mathematical Society \textbf{355} (2003), no.~11, 4397--4426.

\bibitem[Erm09]{2009}
Daniel Erman, \emph{The semigroup of betti diagrams}, Algebra \& Number Theory
  \textbf{3} (2009), no.~3, 341–365.

\bibitem[ES]{eisenbud2008Betti}
David Eisenbud and Frank-Olaf Schreyer, \emph{Betti numbers of graded modules
  and cohomology of vector bundles}, J. Amer. Math. Soc. \textbf{22}, no.~3,
  859--888.

\bibitem[ES09]{eisenbud2009cohomology}
David Eisenbud and Frank-Olaf Schreyer, \emph{Cohomology of coherent sheaves
  and series of supernatural bundles}, 2009.

\bibitem[ES16]{Erman_2016}
Daniel Erman and Steven~V Sam, \emph{Supernatural analogues of beilinson
  monads}, Compositio Mathematica \textbf{152} (2016), no.~12, 2545--2562.

\bibitem[FJK11]{2011}
Gunnar Fl{\o}ystad, Trygve Johnsen, and Andreas~Leopold Knutsen (eds.),
  \emph{Combinatorial aspects of commutative algebra and algebraic geometry},
  Springer Berlin Heidelberg, 2011.

\bibitem[FL18]{Ford_2018b}
Nicolas Ford and Jake Levinson, \emph{Foundations of
  boij{\textendash}söderberg theory for grassmannians}, Compositio Mathematica
  \textbf{154} (2018), no.~10, 2205--2238.

\bibitem[Fl{\o}10]{Floystad_2010}
Gunnar Fl{\o}ystad, \emph{The linear space of betti diagrams of multigraded
  artinian modules}, Mathematical Research Letters \textbf{17} (2010), no.~5,
  943--958.

\bibitem[FLS18]{Ford_2018a}
Nicolas Ford, Jake Levinson, and Steven Sam, \emph{Towards
  boij{\textendash}söderberg theory for grassmannians : the case of square
  matrices}, Algebra \& Number Theory \textbf{12} (2018), no.~2, 285--303.

\bibitem[GS]{M2}
Daniel~R. Grayson and Michael~E. Stillman, \emph{Macaulay2, a software system
  for research in algebraic geometry}, Available at
  url{http://www.math.uiuc.edu/Macaulay2/}.

\bibitem[Hoc74]{Hoc74}
Melvin Hochster, \emph{{The equicharacteristic case of some homological
  conjectures on local rings}}, Bulletin of the American Mathematical Society
  \textbf{80} (1974), no.~4, 683 -- 686.

\bibitem[IMW22]{IMW22}
Srikanth~B. Iyengar, Linquan Ma, and Mark~E. Walker, \emph{Lim ulrich sequences
  and boij-söderberg cones}, 2022.

\bibitem[IW18]{iyengar2018examples}
Srikanth~B Iyengar and Mark~E Walker, \emph{Examples of finite free complexes
  of small rank and small homology}, Acta Mathematica \textbf{221} (2018),
  no.~1, 143--158.

\bibitem[PS73]{PS73}
Christian Peskine and Lucien Szpiro, \emph{Dimension projective finie et
  cohomologie locale}, Publications Math\'ematiques de l'IH\'ES \textbf{42}
  (1973), 47--119 (fr).

\bibitem[Rob89]{Rob89}
Paul Roberts, \emph{Intersection theorems}, Commutative Algebra (New York, NY)
  (M.~Hochster, C.~Huneke, and J.~D. Sally, eds.), Springer New York, 1989,
  pp.~417--436.

\bibitem[SS21]{DeStefani2021}
Alessandro~De Stefani and Ilya Smirnov, \emph{Decomposition of graded local
  cohomology tables}, Mathematische Zeitschrift \textbf{297} (2021), no.~1,
  1--24.

\bibitem[{Sta}22]{stacks-project}
The {Stacks project authors}, \emph{The stacks project},
  \url{https://stacks.math.columbia.edu}, 2022.

\bibitem[{\c{S}}{\"U}19]{csenturk2019carlsson}
Berrin {\c{S}}ent{\"u}rk and {\"O}zg{\"u}n {\"U}nl{\"u}, \emph{Carlsson's rank
  conjecture and a conjecture on square-zero upper triangular matrices},
  Journal of Pure and Applied Algebra \textbf{223} (2019), no.~6, 2562--2584.

\end{thebibliography}
\addcontentsline{toc}{section}{Bibliography}

\end{document}